 \numberwithin{equation}{section}
\theoremstyle{plain}
\theoremstyle{plain}
\newtheorem{theorem}{Theorem}[section]  
\newtheorem{lemma}[theorem]{Lemma}
\newtheorem{remark}[theorem]{Remark}\newtheorem{example}[theorem]{Example}
\theoremstyle{definition}
\newtheorem{definition}[theorem]{Definition}
\newtheorem{problem}[theorem]{Problem}
\theoremstyle{remark}
\newcommand\C{\mathbb C}        
\newcommand\R{\mathbb R}        
\newcommand\N{\mathbb N}         
\newcommand\Ha{\mathbb H}
\newcommand\D{\mathbb D}      
\newcommand\F{\mathcal F}    
\newcommand{\eps}{\varepsilon}
\newcommand{\LandauO}{\mathcal{O}}
\renewcommand\Im{\text{Im}}        
\renewcommand\Re{\text{Re}}        
\newcommand{\supp}{\operatorname{supp}} 
\newcommand{\ID}{{\rm \bf ID}} 
\def\QEQ{{%
    \setbox0\hbox{$\rhd$}%
    \rlap{\hbox to \wd0{\hss$\cdot$\hss}}\box0
}}
\begin{document}
\parindent 0pt 
\definecolor{red}{rgb}{0.9, 0.0, 0.0}
\definecolor{magenta}{rgb}{0, 0.9, 0.0}
\pagestyle{empty} 

\setcounter{page}{1}

\pagestyle{plain}

\title{Problems related to conformal slit-mappings}
\date{\today}
\author{I. Hotta and S. Schlei{\ss}inger}
\thanks{The first author was supported by JSPS KAKENHI Grant no. 17K14205}
\thanks{The second author was supported by the German Research Foundation (DFG), project no. 401281084.}

\keywords{Slit-mapping, univalent function, Loewner chain, monotone convolution, Hilbert transform}

\subjclass[2010]{Primary: 30C35, 46L53, Secondary: 30C80 30C55, 60G51}

\begin{abstract}
In this note we discuss some problems related to conformal slit-mappings. On the one hand, classical Loewner theory leads us to 
questions concerning the embedding of univalent functions into slit-like Loewner chains. On the other hand, a recent result from monotone probability theory motivates the study of univalent functions from a probabilistic perspective.
\end{abstract}

\maketitle

\tableofcontents

\newpage

\section{Introduction}
Let $\D=\{z\in\C\,|\, |z|<1\}$ be the unit disc in the complex plane.
The class $S$ is defined as the set of all univalent (=holomorphic and injective) $f:\D\to\C$ with $f(0)=0$ and $f'(0)=1.$\\
The famous Bieberbach conjecture states that if $f(z)=z+\sum_{n\geq 2}a_nz^n$ belongs to $S$, then $|a_n|\leq n$ for all $n\geq 2.$ 
Bieberbach himself proved the case $n=2$. Later on, Loewner introduced a new method to handle the case $n=3$ 
(\cite{Loewner:1923}).
His approach has been extended and generalized to what is now called \emph{Loewner theory}, and it 
was also used in the final proof of the conjecture by de Branges.

\begin{definition}
A \emph{(normalized radial) Loewner chain} is a family $(f_t)_{t\geq0}$ of univalent functions $f_t:\D\to \C$ with $f_t(0)=0$, 
$f_t'(0)=e^t$, and $f_s(\D)\subseteq f_t(\D)$ whenever $s\leq t.$ We say that a function $f\in S$ can be \emph{embedded} into a Loewner chain 
if there exists a Loewner chain $(f_t)$ with $f_0=f$.
\end{definition}

A Loewner chain is differentiable almost everywhere and satisfies Loewner's partial differential equation:
 \begin{equation}\label{PDE}
  \frac{\partial f_t}{\partial t}(z) = z f '_t(z) p(t,z) \quad 
  \text{ for a.e. $t\geq 0$ and all $z\in\D$}. 
  \end{equation}

 The function $p:[0,\infty)\times \D\to\C$ is a so called Herglotz vector field, i.e., for almost every $t\geq0$,
 $p(t,\cdot)$ maps $\D$ holomorphically into 
 the right half-plane and $0$ onto $1$, and for every $z\in\D$, $t\mapsto p(t,z)$ is measurable. Conversely, 
 every Herglotz vector field 
 uniquely defines a Loewner chain. We refer to \cite[Chapter 6]{Pom:1975} for these statements. 
 Pommerenke proved the following nice result.

 \begin{theorem}[Theorem 6.1 in \cite{Pom:1975}]\label{thm:e}
 Every $f\in S$ can be embedded into a Loewner chain.
 \end{theorem}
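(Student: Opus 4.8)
The plan is to reduce, via a normal-families argument, to a class of functions dense in $S$ for which a Loewner chain can be written down by hand, and then to take limits. The dense class will be the \emph{single-slit mappings}: $f\in S$ with $f(\D)=\C\setminus\Gamma$, where $\Gamma=\gamma([0,\infty))$ for a Jordan arc $\gamma$ with $0\notin\Gamma$ and $\gamma(s)\to\infty$. For such an $f$ and $\tau\ge0$ put $\Omega_\tau:=\C\setminus\gamma([\tau,\infty))$, so $\Omega_0=f(\D)$, the $\Omega_\tau$ increase strictly with $\tau$, and they exhaust $\C$ as $\tau\to\infty$. Let $r(\tau)$ be the conformal radius of $\Omega_\tau$ at $0$; it is continuous (Carath\'eodory kernel theorem), strictly increasing (Schwarz lemma), equals $1$ at $\tau=0$ (as $f\in S$), and tends to $\infty$, hence $r:[0,\infty)\to[1,\infty)$ is a homeomorphism. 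Letting $f_t:\D\to\Omega_{r^{-1}(e^t)}$ be the Riemann map normalized by $f_t(0)=0,\ f_t'(0)>0$ gives $f_t'(0)=e^t$ and $f_0=f$, and since the images increase, $(f_t)_{t\ge0}$ is a Loewner chain embedding $f$. (That it also solves \eqref{PDE}, with $p(t,z)=\frac{\kappa(t)+z}{\kappa(t)-z}$ for a continuous $\kappa:[0,\infty)\to\partial\D$, is Loewner's classical computation, but is not needed here.)

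Next one needs that single-slit mappings are dense in $S$. Replacing $f$ by $z\mapsto f(rz)/r$ with $r\uparrow1$, it suffices to approximate the Riemann map onto a bounded domain $G\ni0$ with analytic Jordan boundary. Attach to a boundary point of $G$ a Jordan arc that comes in from $\infty$ and then spirals $n$ times around $\partial G$, staying just outside it and tightening onto it; deleting this arc from $\C$ leaves a single-slit domain $\Omega_n\ni0$, and as $n\to\infty$ the spiral squeezes the annular neighbourhood of $\partial G$ to zero width, so $\Omega_n$ kernel-converges to $G$. By the Carath\'eodory kernel theorem the corresponding Riemann maps converge locally uniformly to the Riemann map onto $G$; dividing by their derivatives at $0$ (which tend to that of the limit) yields single-slit mappings in $S$ converging to $f$.

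Finally, for arbitrary $f\in S$ pick single-slit mappings $f_n\to f$ locally uniformly with Loewner chains $(f^n_t)$ as above, $f^n_0=f_n$. For each fixed $t$, $e^{-t}f^n_t\in S$, so $\{f^n_t:n\}$ is a normal family (Koebe's estimate); by a diagonal argument over $t\in\mathbb Q\cap[0,\infty)$ we pass to a subsequence with $f^n_q\to f_q$ locally uniformly for all rational $q\ge0$, each $e^{-q}f_q\in S$. The inclusions $f^n_p(\D)\subseteq f^n_q(\D)$ ($p\le q$) pass to kernels, so $f_p(\D)\subseteq f_q(\D)$ for rational $p\le q$. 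For arbitrary $t\ge0$ set $\mathcal D_t:=\bigcup_{q\in\mathbb Q\cap[0,t]}f_q(\D)$: an increasing union of simply connected domains through $0$, hence simply connected, with conformal radius $\sup_{q\le t}e^q=e^t<\infty$ by the kernel theorem, so $\mathcal D_t\subsetneq\C$. Let $f_t:\D\to\mathcal D_t$ be the Riemann map with $f_t(0)=0,\ f_t'(0)=e^t$; then $\mathcal D_s\subseteq\mathcal D_t$ for $s\le t$, so $(f_t)_{t\ge0}$ is a Loewner chain, and $f_0=\lim_n f^n_0=\lim_n f_n=f$. The only nontrivial ingredient here is the density of single-slit mappings: the spiral construction and its kernel convergence have to be made precise. (An alternative avoiding it would be to grow $f(\D)$ directly by a Loewner--Kufarev flow, but then one must prove that flow is globally defined and exhausts $\C$, which is of comparable difficulty.)
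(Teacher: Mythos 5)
The paper does not prove this statement---it is quoted directly from Pommerenke's book---and your argument follows exactly the classical route of that proof: explicit chains for single-slit mappings obtained by reparametrizing the erased arc via the conformal radius, density of single-slit mappings in $S$ through Carath\'eodory kernel approximation (which you correctly flag as the one step needing detailed verification), and a normal-families/diagonal/kernel-theorem passage to the limit, with the images for irrational times filled in as increasing unions. The proposal is correct as outlined.
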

 \begin{remark}
  Loewner chains can also be regarded in $\C^n$ or on complex manifolds.  We refer to  \cite{Fia17} for embedding problems 
  of biholomorphic mappings on the Euclidean unit ball in $\C^n$ and to the recent result from \cite{FW18}, 
  which shows that the analogue of Pommerenke's theorem fails in higher dimensions.
 \end{remark}

A \emph{slit in $\C$} is a Jordan curve $\Gamma$ connecting some
 $z_0\in \C$ to $\infty.$
We call $f\in S$ a \emph{slit mapping} if $f(\D)$ is the complement of a slit.  Loewner's original result focuses on slit mappings.
  
 \begin{theorem}[\cite{Loewner:1923}]\label{loe}
 Let $f\in S$ be a slit mapping. Then $f$ can be embedded into exactly one Loewner chain $\{f_t\}_{t\geq0}$. 
 There exists a continuous $\kappa:[0,\infty)\to\partial\D$ such that
 \begin{equation}\label{slit_LE} \frac{\partial f_t}{\partial t}(z) = z f '_t(z)
 \frac{\kappa(t)-z}{\kappa(t)+z} \quad \text{ for every $t\geq 0.$} 
 \end{equation}
 
 \end{theorem}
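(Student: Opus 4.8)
The plan is to construct the embedding Loewner chain by hand out of the slit, to read off uniqueness from a topological observation about complements, and then to extract the special form of the Herglotz field by a concentration argument. First I would fix a parametrization $\gamma:[0,\infty]\to\hat\C$ of the slit, a homeomorphism onto $\Gamma\cup\{\infty\}$ with $\gamma(0)=z_0$ and $\gamma(\infty)=\infty$, and set $\Omega_t:=\C\setminus\gamma([t,\infty])$. These are simply connected domains, strictly increasing in $t$, containing $0$, with $\Omega_0=f(\D)$ and exhausting $\C$ (continuity of $\gamma$ at $\infty$ forces the tail $\gamma([T,\infty])$ out of any fixed disc, so $\Omega_t$ eventually contains arbitrarily large discs about $0$). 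Letting $g_t:\D\to\Omega_t$ be the Riemann map with $g_t(0)=0$, $g_t'(0)>0$, I would check that $g_0=f$, and that $\beta(t):=g_t'(0)$ is strictly increasing (Schwarz lemma, strict because the inclusions are strict), continuous (Carathéodory's kernel theorem, since $\Omega_{t'}\to\Omega_t$ in the kernel sense with respect to $0$), with $\beta(0)=1$ and $\beta(t)\to\infty$. Reparametrizing via $\theta:=\beta^{-1}\circ\exp$ and setting $f_t:=g_{\theta(t)}$ then yields a normalized radial Loewner chain with $f_0=f$.

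For uniqueness, let $(h_t)$ be any Loewner chain with $h_0=f$. Then $h_t(\D)$ is simply connected, is not all of $\C$, and contains $h_0(\D)=\C\setminus\Gamma$, so $\hat\C\setminus h_t(\D)$ is a closed connected subset of the arc $\Gamma\cup\{\infty\}$ containing $\infty$ --- hence a sub-arc $\gamma([c,\infty])$. Thus $h_t(\D)=\Omega_c$ for some $c$, and since $h_t(0)=0$ and $h_t'(0)=e^t>0$, the map $h_t$ is the normalized Riemann map of $\Omega_c$; this forces $\beta(c)=e^t$, i.e.\ $c=\theta(t)$ and $h_t=f_t$. So the chain is unique.

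Finally, for the ODE I would analyse the transition maps $\varphi_{s,t}:=f_t^{-1}\circ f_s:\D\to\D$ for $s\le t$: each is univalent, fixes $0$, has $\varphi_{s,t}'(0)=e^{s-t}$, and its image is $\D$ with an arc removed joining an interior point to the boundary point $\kappa(t)\in\partial\D$, where $\kappa(t)$ is the prime-end preimage under $f_t$ of the tip of the slit bounding $f_t(\D)$. The first task is to prove that $\kappa:[0,\infty)\to\partial\D$ is continuous; this is the technical heart, combining continuity of $t\mapsto f_t$ (kernel theorem) with control of conformal maps near a slit tip. Granting that, I would fix $t$, put $\psi_h:=\varphi_{t,t+h}$ for $h>0$, and note that $P_h(z):=\log(z/\psi_h(z))$ has positive real part with $P_h(0)=h$, hence a Herglotz representation $P_h(z)=\int_{\partial\D}\frac{\xi+z}{\xi-z}\,d\nu_h(\xi)$ with $\nu_h\ge0$ of total mass $h$. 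As $h\downarrow0$ the removed slit shrinks to $\{\kappa(t)\}$, so $\psi_h\to\mathrm{id}$ locally uniformly on $\overline\D\setminus\{\kappa(t)\}$ and $\Re P_h\to0$ off $\kappa(t)$, which forces $\tfrac1h\nu_h\to\delta_{\kappa(t)}$ weakly, whence $\tfrac1h\log\frac{z}{\psi_h(z)}\to\frac{\kappa(t)+z}{\kappa(t)-z}$ locally uniformly in $\D$. Plugging $\psi_h(z)-z=-h\,z\,\frac{\kappa(t)+z}{\kappa(t)-z}+o(h)$ into $f_{t+h}(z)-f_t(z)=f_{t+h}(z)-f_{t+h}(\psi_h(z))=-f_{t+h}'(z)\,(\psi_h(z)-z)+o(h)$ and using $f_{t+h}'\to f_t'$ gives the right $t$-derivative $z f_t'(z)\frac{\kappa(t)+z}{\kappa(t)-z}$; the same argument with $\varphi_{t-h,t}$ gives the left one, and after replacing $\kappa$ by $-\kappa$ (a harmless change of sign convention) one obtains \eqref{slit_LE} for every $t\ge0$ with $\kappa$ continuous. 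I expect the continuity of $\kappa$ --- and, relatedly, the uniform-in-$h$ estimates needed to turn ``the slit shrinks to $\kappa(t)$'' into ``$\nu_h$ concentrates at $\kappa(t)$'' --- to be the real obstacle: kernel convergence handles the interior effortlessly, but pushing that control up to $\partial\D$ near the tip is where the work lies.
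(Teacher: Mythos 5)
The paper does not prove this theorem at all; it is quoted from L\"owner's 1923 paper (see also \cite[Chapter 6]{Pom:1975} and Duren, Ch.~3), so there is no internal proof to compare against. Your sketch is the standard classical argument, and two of its three parts are complete and correct: the construction of the chain by erasing the slit (with the kernel-theorem continuity of $\beta(t)=g_t'(0)$, strict monotonicity via Schwarz, and surjectivity onto $[1,\infty)$ via Koebe's quarter theorem applied to the large discs eventually contained in $\Omega_t$), and the uniqueness argument, which is a genuinely clean observation: any competing chain $(h_t)$ has $\hat\C\setminus h_t(\D)$ closed, connected, containing $\infty$, and contained in the arc $\Gamma\cup\{\infty\}$, hence equal to some tail $\gamma([c,\infty])$, and the normalization pins down $c$. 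That part needs nothing more.

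The gap is exactly where you say it is, but it should be stressed that it is not a routine technicality --- it is the actual content of L\"owner's theorem. Your concentration argument ($P_h(z)=\log(z/\psi_h(z))$ has positive real part, mass $h$ at the origin, and $\tfrac1h\nu_h\to\delta_{\kappa(t)}$) is the right mechanism, but it only closes once you prove that the support of $\nu_h$, namely the boundary arc $B_h\subset\partial\D$ that $\psi_h$ maps onto the removed slit, satisfies $\operatorname{diam}B_h\to 0$ uniformly for $t$ in compact intervals, with $B_h$ shrinking to the single point $\kappa(t)$; kernel convergence alone gives no boundary control, and ``$\Re P_h\to 0$ off $\kappa(t)$'' is a consequence of this shrinking, not a substitute for it. The standard way to close it is a distortion/extremal-length estimate showing $\operatorname{diam}f_{t+h}^{-1}\bigl(\gamma([\theta(t),\theta(t+h)])\bigr)\to0$ uniformly (this is Pommerenke's ``local growth'' condition, \cite[Theorem 6.2]{Pom:1975} and \cite[Theorem 1]{MR0206245}), which simultaneously yields the continuity of $\kappa$, the agreement of the one-sided derivatives, and the uniformity needed in your $o(h)$ terms. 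Two minor points: the $o(h)$ in $f_{t+h}(z)-f_{t+h}(\psi_h(z))=-f_{t+h}'(z)(\psi_h(z)-z)+o(h)$ requires Cauchy estimates uniform in $h$ (harmless, since $f_{t+h}\to f_t$ locally uniformly); and your sign flip $\kappa\mapsto-\kappa$ does produce \eqref{slit_LE}, but then the identification $\kappa(t)=f_t^{-1}(\gamma(t))$ of the paper's subsequent remark acquires a minus sign, so one convention or the other has to be adjusted.
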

 \begin{remark} Each $f_t$ maps $\D$ onto the complement of a subslit. Denote by $\gamma(t)$ the tip of this slit.
 Then, for each $t\geq0$, $f_t^{-1}$ can be extended continuously to $\gamma(t)$ and the driving function $\kappa$ 
 can be written as \begin{equation}\label{kappa}
           \kappa(t) = f_t^{-1}(\gamma(t)).
          \end{equation}
 \end{remark}
 
In this paper, we address some embedding problems in Section \ref{Sec_emb}, 
which are all motivated by Theorem \ref{loe}. 
In Section \ref{Sec_mon}, we look at slit mappings from a probabilistic point of view.
 
\newpage
 
 \section{Embedding problems}\label{Sec_emb}
 
   Note that in Theorem \ref{loe}, the Loewner chain $(f_t)$ is \emph{uniquely determined}
   and \emph{differentiable everywhere}
  (right-differentiable at $t=0$). This leads us to a couple of subclasses of $S$ related 
  to embedding problems. Before defining these classes, we point out how to recover the first element 
  of a Loewner chain from the Loewner equation.\\
  
   Loewner's ordinary differential equation is the following analogue to \eqref{PDE}:
    \begin{equation}\label{ODE}
  \frac{\partial \varphi_{s,t}}{\partial t}(z) = -\varphi_{s,t}(z)\cdot p(t,\varphi_{s,t}(z)) \quad 
  \text{ for a.e. $t\geq s$ with $\varphi_{s,s}(z)=z$ } 
  \end{equation}
  for all $z\in \D$. The solution $(\varphi_{s,t})_{0\leq s \leq t}$ is a family of univalent functions 
  $\varphi_{s,t}:\D\to\D$.\\
  
  If $(f_t)$ satisfies \eqref{PDE}, then $\varphi_{s,t}$ is given by 
  $\varphi_{s,t}=f_t^{-1}\circ f_s$ and the functions $(\varphi_{s,t})_{0\leq s \leq t}$ 
  are thus called the transition mappings of the Loewner chain.\\
  Conversely, if $\varphi_{s,t}$ is the solution to \eqref{ODE}, then, for every $s\geq0$, 
  \begin{equation}\label{Loewner_ODE}
  f_{s} = \lim_{t\to\infty}e^t\varphi_{s,t}
  \end{equation}
  locally uniformly on $\D$; see \cite[Theorem 6.3]{Pom:1975}.  Thus, the first element of a Loewner chain can also be regarded as the infinite time limit 
  of the solution of \eqref{ODE} for $s=0$.

  \begin{remark}\label{ODE_reach}
   If $D\subset \D$ is a simply connected domain with $0\in D$, then there exists $T>0$ 
   and a Herglotz vector field $p(t,z)$ such that the solution $\varphi_{0,t}$ of \eqref{ODE} 
   satisfies $\varphi_{0,T}(\D)=D$. This follows basically from Theorem \ref{thm:e} and is mentioned 
   as an exercise in \cite[Section 6.1, Problem 3]{Pom:1975}.\\ 
   The above statement is equivalent to the following: Let $f\in S$ such that $f(\D)$ is bounded. 
   Then there exists $T>0$ and a Loewner chain $(f_t)$ such that $f_0=f$ and $f_T(\D)=e^T\D$.
  \end{remark}


 \subsection{Differentiability}
Let us call a Loewner chain $(f_t)$ \emph{differentiable} if $t\mapsto f_t(z)$ is differentiable at every $t\geq 0$ 
for every $z\in\D$. We define the class
 	$$S_d := \{f\in S\,|\, \text{$f$ can be embedded into a differentiable Loewner chain}\}.$$ 
 	Every slit mapping belongs to $S_d$ due to Theorem \ref{loe}. 
 	Another simple example can be obtained as follows. Assume that $f(\D)$ is bounded by a closed Jordan curve. 
 	Then we can first connect this curve to $\infty$ by a Jordan arc, and now  
 	erase the two curves to obtain a Loewner chain satisfying \eqref{slit_LE}.
	
	 \begin{figure}[h]\label{jordan_figure}
 \begin{center}
 \includegraphics[width=0.35\textwidth]{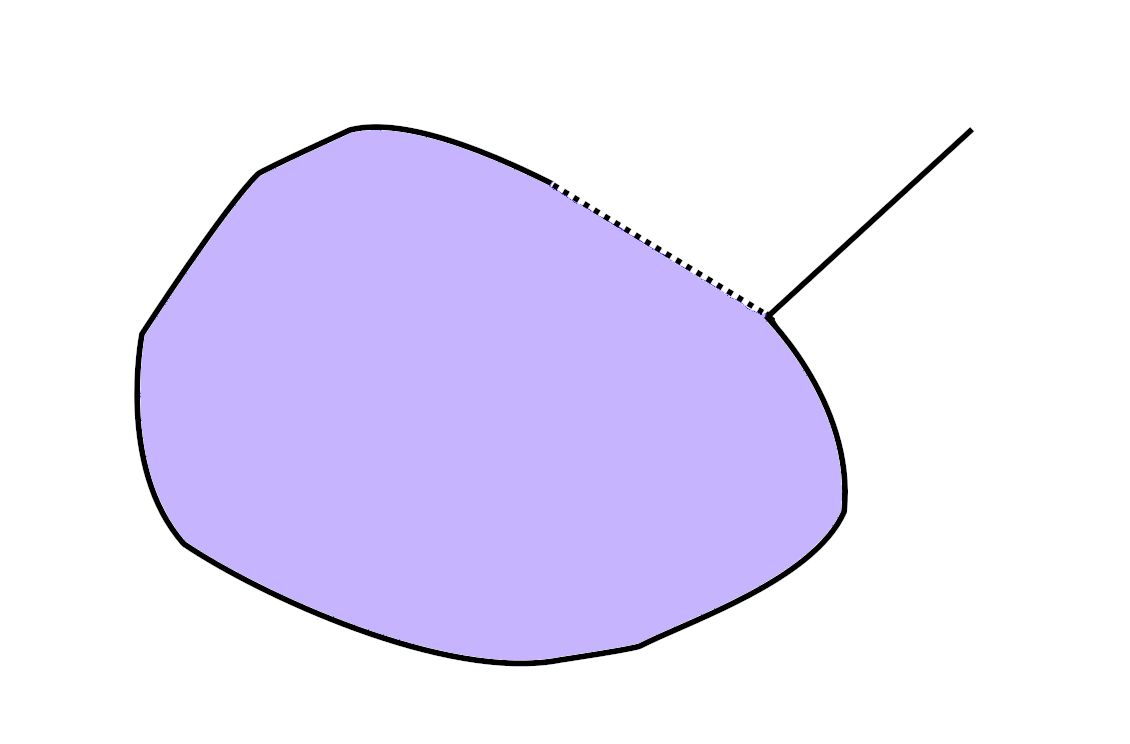}
 \caption{A Jordan domain (blue) connected to $\infty$ by a Jordan arc.}
 \end{center}
 \end{figure}
	
	Suppose that $f\in S$ maps $\D$ onto the complement of two disjoint slits. Then we can embed $f$ into a
 	Loewner chain by erasing a piece of the first slit in some time interval $[0,T_1]$, then a piece of the second slit in 
 	an interval $[T_1,T_2]$, etc. In this case, $(f_t)$ is not differentiable at $t=T_1$. 
 	However, one can also erase the slits simultaneously and then the corresponding Loewner
 	chain is differentiable everywhere. This is true for any $f$ mapping $\D$ onto 
 	the complement of finitely many slits. These statements follow from \cite[Theorem 2.31]{Bo}. \\
 	
 However, not every $f\in S$ belongs to $S_d$.
 
\newpage
 
 \begin{theorem}\label{S_d} There exists $f\in S\setminus S_d$.
 \end{theorem}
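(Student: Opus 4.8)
The plan is to exhibit an explicit $f\in S$ whose complementary continuum is so entangled that no Loewner chain with $f_0=f$ can be differentiable. Fix $h_1>h_2>\cdots$ with $h_m\downarrow0$ and let
\[
\Gamma:=\{1+iy:y\ge0\}\ \cup\ \big([1,2]\times\{0\}\big)\ \cup\ \bigcup_{m\ge1}\big([1,2]\times\{h_m\}\big),
\]
a ``comb'': a \emph{spine} from $1$ to $\infty$, countably many bounded \emph{teeth} $[1,2]\times\{h_m\}$ attached to the spine, and a \emph{limit tooth} $[1,2]\times\{0\}$ onto which the teeth accumulate. Then $\Gamma$ is a continuum, it is tree-like (so $(\C\cup\{\infty\})\setminus\Gamma$ is simply connected), it fails to be locally connected at each point of the limit tooth, and $0\notin\Gamma$. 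Let $\psi\colon\D\to\C\setminus\Gamma$ be the Riemann map normalised by $\psi(0)=0$, $\psi'(0)>0$, and set $f:=\psi/\psi'(0)\in S$. I claim $f\notin S_d$.

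Suppose, for contradiction, that $(f_t)_{t\ge0}$ is a differentiable Loewner chain with $f_0=f$ and Herglotz field $p(t,z)=\int_{\partial\D}\frac{\xi+z}{\xi-z}\,d\mu_t(\xi)$ for probability measures $\mu_t$ on $\partial\D$. Put $K_t:=(\C\cup\{\infty\})\setminus f_t(\D)$; this is a decreasing family of continua, each a subtree of $K_0=\Gamma\cup\{\infty\}$ containing $\infty$, and by Koebe's theorem ($f_t(\D)\supseteq\{|w|<e^t/4\}$) one has $\bigcap_t K_t=\{\infty\}$, so every point of $\Gamma$ is removed in finite time and, $\Gamma\setminus\{\infty\}$ being bounded, there is a finite $T$ with $\Gamma\cap K_T=\emptyset$. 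From the integrated Loewner equation $f_t(z)-f_s(z)=\int_s^t z f_\tau'(z)p(\tau,z)\,d\tau$ (valid since $t\mapsto f_t(z)$ is locally absolutely continuous and satisfies \eqref{PDE} a.e.), differentiability of $t\mapsto f_t(z)$ at a point $t^*$ for every $z\in\D$ forces the one-sided limits $p(t^*{}^\pm,z)$ to exist and coincide; equivalently $t\mapsto\mu_t$ is weak-$*$ continuous. (A minor point is to exclude convergence of the Ces\`aro means of $p(\cdot,z)$ without a genuine one-sided limit; this is dispatched by letting $z\to\partial\D$ radially and using monotonicity in $t$ of the relevant boundary quantities.)

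The heart of the argument is to produce a forced weak-$*$ jump. Using Carath\'eodory kernel convergence of $(f_t)$, the extension of $f_t^{-1}$ to accessible boundary points, and the fact that each $K_t$ is a subtree of the comb, one shows: while $K_t$ still contains the attaching point $1+i0$ together with infinitely many teeth, $\mu_t$, restricted to any small arc about the cluster point $\zeta^*$ of the preimages of the (still uncovered) tooth tips, cannot be a single point mass, because uncovering these teeth forces $\mu_t$ to charge their pairwise distinct tip-preimages and the comb geometry keeps this ``load'' from collapsing onto one point. On the other hand, since a tooth must be removed before the spine can be eaten past its foot, there is a finite time $t^*\le T$ at which the limit tooth together with $1+i0$ is first swallowed by $f_t(\D)$; just after $t^*$ the boundary near the relevant prime end is the smooth spine, so $\mu_{t^*{}^+}$ is the slit-tip point mass of the remaining spine, whereas $\mu_{t^*{}^-}$ is genuinely spread out. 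This contradicts the weak-$*$ continuity above, and hence $f\notin S_d$.

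Essentially all the work is in this last step, and it is the step I expect to be the main obstacle. Because $f$ is far from a slit mapping, the chain through it is highly non-unique, so one must defeat \emph{every} chain --- in particular ``synchronised'' strategies that dissolve all teeth at once while letting the mass on each individual tooth fade to $0$, and any mechanism by which the spread of $\mu_{t^*{}^-}$ might collapse to a point in the limit. The fact that must be made quantitative and uniform over all chains is that swallowing a neighbourhood of the tip $2+i0$ of the limit tooth necessarily uncovers the tips of all but finitely many teeth (these tips lie within $h_m$ of it), so just before $t^*$ the driving measure carries positive total mass on infinitely many distinct preimages, none of which is the preimage of the freshly exposed spine tip afterwards. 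Turning the informal principle ``the driving measure tracks the boundary arc being uncovered'' into a statement strong enough to force this jump for an arbitrary chain is the crux; it should rest on the rigidity of decreasing families of subcontinua of a tree, together with the Koebe bound that confines the whole dissolution to a bounded time interval. Should a single accumulation turn out to be insufficient to defeat every synchronised strategy, the same scheme applies with a comb carrying teeth at several incommensurable scales.
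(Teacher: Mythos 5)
There is a genuine gap, and it sits exactly where you locate it yourself: the ``forced weak-$*$ jump'' is asserted, not proved, and the most natural adversary chain defeats it. Since the complement of your comb is far from curve-like, a Loewner chain through $f$ may first dissolve \emph{all} the teeth simultaneously (shrinking each tooth $[1,2]\times\{h_m\}$ to its attachment point on the spine at rates chosen so that the total capacity grows correctly), arriving at a time $t_1<t^*$ with $K_{t_1}$ equal to the spine union the limit tooth and \emph{no} teeth left; it then erases the limit tooth and the spine as ordinary slits with a continuous driving function. For such a chain there is nothing ``spread out'' just before the limit tooth is swallowed, so the claimed discontinuity of $\mu_t$ at $t^*$ simply does not occur, and no contradiction is reached. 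You acknowledge this synchronised strategy but offer no mechanism to exclude it (nor to rule out that it can be made differentiable); note that the paper explicitly remarks that failure of local connectedness of $\partial f(\D)$ is compatible with $f\in S_d$, so the choice of example itself is in doubt. A second, non-minor problem is the reduction from ``$t\mapsto f_t(z)$ differentiable at $t^*$'' to ``the one-sided limits $p(t^{*\pm},z)$ exist'': the integrated equation only yields convergence of the \emph{averages} $\frac1h\int_{t^*}^{t^*\pm h}p(\tau,z)\,d\tau$, which is strictly weaker than existence of one-sided limits or weak-$*$ continuity of $\mu_t$; the parenthetical appeal to radial limits and ``monotonicity of boundary quantities'' does not close this.

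The paper's proof avoids both difficulties by a quite different and essentially rigidity-based construction. It takes the topological sine curve $J$ together with two access curves $\beta_1,\beta_2$ attached at $i$ and $-i$, normalised so that $\C\setminus(J\cup\beta_1)$ and $\C\setminus(J\cup\beta_2)$ both have capacity $1$. Because these complements are curve-like, each of the two resulting maps $h_1,h_2\in S$ embeds into a \emph{unique} Loewner chain (there is no adversary to defeat), the two chains coincide for $t\geq T$ (both must first erase their $\beta_j$ and then the topological sine), and on $[0,T]$ each satisfies the slit equation with a \emph{continuous} driving function whose values at $T$ are the two distinct landing points $K_1\neq K_2$ given by Pommerenke's cluster-set results. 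Hence $\frac{d}{dt}\bigl(f_{1,t}(z)-f_{2,t}(z)\bigr)$ has a genuine nonzero left limit at $T$ and vanishes for $t>T$; Darboux's theorem (derivatives cannot have jump discontinuities) then shows that at least one of the two chains fails to be differentiable at $T$. The continuity of the driving functions up to $T$ is precisely what replaces your unproved weak-$*$ continuity step, and the uniqueness of the chains is what replaces the ``defeat every chain'' problem. If you want to salvage your approach, you would need either to restrict to an example with a unique chain (at which point you are close to the paper's construction) or to prove a quantitative lower bound on the spread of $\mu_t$ valid for \emph{every} chain, which is currently absent.
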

 \begin{proof}
 
   Let $D\subsetneq\C$ be a simply connected domain with $0\in D$ and let $f:\D\to D$ be the conformal mapping
 with $f(0)=0$ and $f'(0)>0$. In what follows, the number $c=f'(0)$ will be called the \emph{capacity} of $D$ 
 and $f$ its \emph{normalized conformal mapping}.\\
 Koebe's one-quarter theorem implies that $D$ contains a disc centered at $0$ with radius $c/4$, 
 see \cite[Theorem 2.3]{duren83}.\\
 
 Consider the topological sine $J=\{x+i\sin(1/x)\,|\, x>0\}\cup \{ix\,|\, x\in[-1,1]\}.$
 We connect $J$ by a Jordan curve $\beta_1$ starting at $i$ and staying in $\C \setminus J$ otherwise.
 We do the same for a second curve $\beta_2$ starting at $-i;$ see the figure below.

 \begin{figure}[h]\label{topsin}
 \begin{center}
 \includegraphics[width=0.60\textwidth]{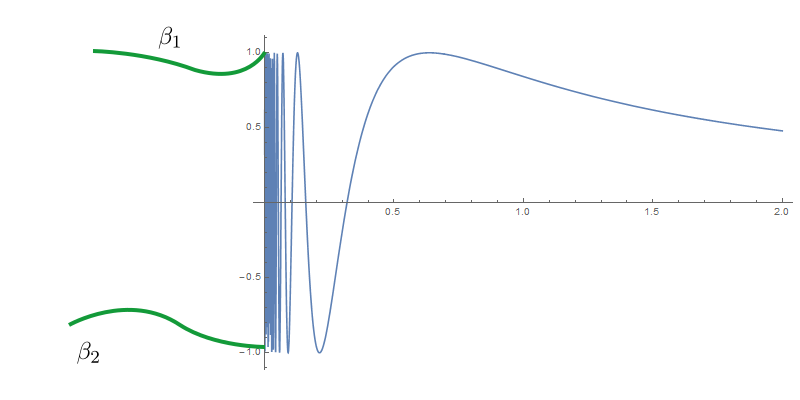}
 \caption{The sets $J, \beta_1, \beta_2$.}
 \end{center}
 
 \end{figure}
 
 Now we translate the set $J\cup \beta_1 \cup \beta_2$ such that $0$ belongs to the complement, and then 
 scale it (we keep the notation for these new sets) such that $\C\setminus (J\cup \beta_1)$ has capacity $1$. 
 Denote by $h_1$ the normalized conformal mapping of $\C\setminus (J\cup \beta_1)$.\\
 
 Next we look at the domain $\C\setminus (J\cup \beta_2)$. If we change it by 
 extending or shortening the curve $\beta_2$, then the capacity changes continuously due to Carath\'{e}odory's 
 kernel theorem. We can extend $\beta_2$ to a neighbourhood of $0$
 to make the  capacity as small as we like, due to Koebe's one-quarter theorem. 
 Furthermore, the domain $\C\setminus J$ has a capacity larger than $1$. \\ 
 Hence, the intermediate value theorem implies that we can extend or shorten $\beta_2$ 
 (we keep the same notation) such that $\C\setminus (J\cup \beta_2)$ has capacity $1$. 
 Let $h_2$ be the normalized conformal mapping of $\C\setminus (J\cup \beta_2)$. \\

 Then $h_1,h_2\in S$ and we can use Theorem \ref{thm:e} to obtain a Loewner chain $\{f_{1,t}\}_{t\geq0}$ 
 with $f_{1,0}=h_1$ and  a Loewner chain $\{f_{2,t}\}_{t\geq 0}$ with $f_{2,0}=h_2.$ 
 It is easy to see that $f_{1,t}$ is unique, as $f_{1,t}$ must erase the curve $\beta_1$ for $t\in [0,T]$ for some $T>0.$ 
 The function $f_{1,T}$ maps $\D$ onto $\C\setminus J.$
  For $t> T,$ the Loewner chain erases the topological sine. Similarly, $f_{2,t}$ is unique and $f_{1,t}=f_{2,t}$ for all $t\geq T.$ \\

 We show that there exist continuous 
 $\kappa_j:[0,T)\cup (T,\infty)\to\partial\D$ such that $f_{j,t}$ satisfies
 \eqref{slit_LE} for every $t\in [0,T)\cup (T,\infty).$ This is clear for $t>T$, as 
 $f_{j,t}$ is simply a slit mapping then and we can apply Theorem \ref{loe}.\\
 Next it follows from \cite[Proposition 2.14]{MR1217706} that 
 $f_{1,T}^{-1}(\beta_j)=f_{2,T}^{-1}(\beta_j)$ is a curve in $\D$ with one endpoint $K_j$ in $\partial\D$.  
 Moreover, \cite[Proposition 2.14]{MR1217706} also states that $K_1\not=K_2$.\\
 Now we conclude that there exist continuous $\kappa_j:[0,T]\to\partial\D$ such that 
 $f_{j,t}$ satisfies \eqref{slit_LE} on $[0,T]$ (with a left-derivative for $t=T$). 
 Furthermore, $\kappa_j(T)=K_j$. This follows readily from the proof of Loewner's theorem. 
 Alternatively, we can regard the family $(f_{j,T}^{-1}\circ f_{j,T-t})_{t\in[0,T]}$. It describes the growth 
 of the slit $f_{1,T}^{-1}(\beta_j)$ and satisfies the time-reversed version of Loewner's differential equation 
 with the Herglotz vector field as in \eqref{slit_LE} with continuous driving function, 
 see \cite[Theorem 2.22]{Bo}. It follows that $f_{j,t}$ satisfies \eqref{slit_LE} with continuous 
 $\kappa_j:[0,T]\to\partial\D$ on $[0,T]$.\\

 Now we show that either $f_{1,t}$ or $f_{2,t}$ is not differentiable at $t=T$. 
 Assume the opposite and fix some $z\in\D\setminus\{0\}$. 
 Then $t\mapsto h_t(z):=f_{1,t}(z)-f_{2,t}(z)$ is differentiable for all $t>0.$ We have $\frac{d}{dt}h_t(z)=0$ for all $t>T.$\\
 Furthermore, we see that the limit
 $$
 \lim_{t\uparrow T}\frac{d}{dt}h_t(z) = 2z^{2}f_{T}'(z)\frac{K_{1}-K_{2}}{(K_{1} + z)(K_{2}+z)}
 $$ 
 exists and is different from $0.$ Hence, $\frac{d}{dt}h_t(z)$ 
 has a first kind discontinuity at $t=T.$ This is a contradiction to Darboux's theorem (applied to the real or imaginary part of 
 $\frac{d}{dt}h_t(z)$).
 \end{proof}

 We remark that there is a wide range of examples of $f \in S_{d}$ 
 whose boundary $\partial f(\D)$ is not locally connected, 
 i.e., $f$ does not have a continuous extension to $\overline{\D}$.
 In fact, typical known subclasses of $S$ in the theory of univalent functions 
 (e.g. close-to-convex functions) are contained in $S_{d}$ (see e.g. Section 3.4 in \cite{Hotta:oldandnew}).
  
 \subsection{Unique embeddings}
 Next we define 
 $$S_u := \{f\in S\,|\, \text{$f$ can be embedded into exactly one Loewner chain}\}.$$ 
  Note that all slit mappings belong to $S_u$. Clearly, there is only one way 
  how to remove a slit by a Loewner chain. 
  The proof of Theorem \ref{S_d} 
  implies that there exists $f\in S_u$ which is not a slit mapping. Roughly speaking, 
  the complement $\C\setminus f(\D)$ must be ``thin'' for $f\in S_u$. One might think that 
  $\C\setminus f(\D)=\partial f(\D)$ for such mappings. However, this is not true due to the next example.
  
  \begin{example}
Let $f\in S$ such that $\partial f(\D)$ is an infinite spiral  $\gamma:(0,1)\to \C$ surrounding a disc $D$, i.e. 
$\gamma(t)\to\infty$ as $t\downarrow 0$ and the set of all accumulation points 
$\lim_{n\to\infty}\gamma(t_n)$ with $t_n\uparrow1$ is equal to the circle $\partial D$. Then $f\in S_u$ and 
$\partial f(\D)\subsetneq \C\setminus f(\D)$, as the interior of $D$ does not belong to $\partial f(\D)$.
  \end{example}
 \begin{figure}[h]
 \begin{center}
 \includegraphics[width=0.5\textwidth]{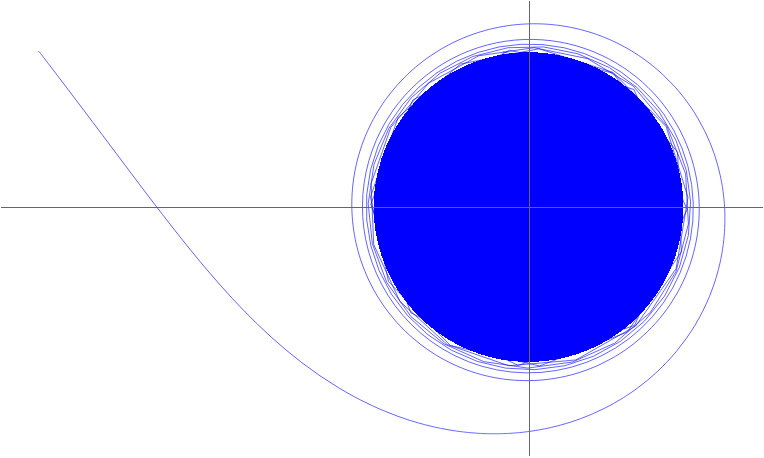}
 \caption{Infinite spiral enclosing a disc.}
 \end{center}
 \end{figure}
  
  The following lemma is quite useful for constructing Loewner chains.
  
    \begin{lemma}\label{ODE_reach2}
   Let $f\in S$ and $D=f(\D)$. Assume that $E\subsetneq \C$ is a simply connected domain with 
   $D\subsetneq E$. Then there exists a Loewner chain $(f_t)$ and $T>0$ such that 
   $f_0=D$ and $f_T=E$.
   \end{lemma}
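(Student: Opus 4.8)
The plan is to reduce the lemma to Remark~\ref{ODE_reach} by conjugating with the Riemann map of $E$. First I would let $g\colon\D\to E$ be the normalized conformal mapping, so that $g(0)=0$ and $g'(0)=c$, the capacity of $E$. The composition $\psi:=g^{-1}\circ f$ is then a univalent self-map of $\D$ with $\psi(0)=0$ whose image $g^{-1}(D)$ is a \emph{proper} subdomain of $\D$; in particular $\psi$ is not a rotation, so Schwarz's lemma gives $\psi'(0)=1/c<1$, i.e.\ $c>1$. Set $T:=\log c>0$, so that $\psi'(0)=e^{-T}$ and $g'(0)=e^{T}$.

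Next I would apply Remark~\ref{ODE_reach} to the simply connected domain $\Omega:=\psi(\D)\subsetneq\D$ with $0\in\Omega$: this yields a Herglotz vector field $p$ whose associated solution $(\varphi_{s,t})$ of the Loewner ODE~\eqref{ODE} satisfies $\varphi_{0,T'}(\D)=\Omega$ for some $T'>0$. Differentiating \eqref{ODE} at $z=0$ shows $\varphi_{s,t}(0)=0$ and $\varphi_{s,t}'(0)=e^{s-t}$; hence $\varphi_{0,T'}$ is a conformal map of $\D$ onto $\Omega$ fixing $0$ with positive derivative there, so it must be \emph{the} normalized conformal map of $\Omega$, and comparing derivatives at $0$ with $\psi$ forces $T'=T$ and $\varphi_{0,T}=\psi$. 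This matching of the reach-time with $\log(\operatorname{cap}(E))$ is the one point one must watch, but it is automatic rather than a genuine obstacle.

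Then I would simply define $f_t:=g\circ\varphi_{t,T}$ for $t\in[0,T]$. Each $f_t$ is univalent with $f_t(0)=0$ and $f_t'(0)=g'(0)\,\varphi_{t,T}'(0)=e^{T}\,e^{t-T}=e^{t}$, and the evolution identity $\varphi_{s,T}=\varphi_{t,T}\circ\varphi_{s,t}$ gives $f_s=f_t\circ\varphi_{s,t}$, hence $f_s(\D)\subseteq f_t(\D)$ whenever $s\le t$; thus $(f_t)_{t\in[0,T]}$ is an initial segment of a Loewner chain. By construction $f_0=g\circ\varphi_{0,T}=g\circ\psi=f$ and $f_T=g\circ\varphi_{T,T}=g$, i.e.\ $f_0(\D)=D$ and $f_T(\D)=E$, as desired.

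Finally, to promote this to a Loewner chain on all of $[0,\infty)$ I would invoke Pommerenke's Theorem~\ref{thm:e} for the function $e^{-T}g\in S$, obtaining a Loewner chain $(\tilde f_t)_{t\ge0}$ with $\tilde f_0=e^{-T}g$, and append $f_{T+t}:=e^{T}\tilde f_t$ for $t\ge0$; this is compatible with the construction at $t=T$ and with the normalizations and the nesting of images. The only non-elementary ingredient in the whole argument is Remark~\ref{ODE_reach}; everything else is the conjugation trick together with routine bookkeeping of normalizations.
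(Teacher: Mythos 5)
Your proof is correct and follows essentially the same route as the paper: decompose $f=g\circ\varphi$ with $g$ the normalized map onto $E$, use Remark~\ref{ODE_reach} to realize $\varphi$ as $\varphi_{0,T}$ on $[0,T]$, and append a Pommerenke chain starting from $e^{-T}g$ for $t\geq T$. The only difference is bookkeeping (you define $f_t=g\circ\varphi_{t,T}$ directly rather than concatenating Herglotz vector fields), and your explicit check that the reach-time from Remark~\ref{ODE_reach} automatically equals $\log g'(0)$ is a point the paper leaves implicit.
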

   \begin{proof}
   Let $g:\D\to E$ be a conformal mapping with 
   $g(0)=0$ and $g'(0)=e^{T}$ for some $T>0$.  There exists a Loewner chain 
   $(h_t)_t$ such that $h_0=e^{-T}g$ due to Theorem \ref{thm:e}. Let $p_1(t,z)$ be the corresponding Herglotz vector field.\\ 
   Write $f=g\circ \varphi$. By Remark \ref{ODE_reach}, there exists a Herglotz vector field $p_2(t,z)$ 
   such that the solution $\varphi_{0,t}$ of \eqref{ODE}  satisfies $\varphi_{0,T}=\varphi$. 
   Now consider the Herglotz vector field $p(t,z)$ defined by 
   \[p(t,z)=p_2(t,z)\quad \text{for $t\leq T$ \quad and} \quad 
   p(t,z)=p_1(t-T,z)\quad \text{for $t>T$ and all $z\in\D$}.\] Let $(f_t)$ be the corresponding Loewner chain 
   with transition 
   mappings $(\psi_{s,t})$. Then $\psi_{0,t}=\varphi_{0,t}$ for all $t\in[0,T]$. 
   We have $f_0=f_T\circ \varphi_{0,T}=f_T\circ \varphi$ 
   and $f_T=\lim_{t\to\infty}e^t \psi_{T,t}=e^Th_0=g$. 
   Hence, $f_T(\D)=E$ and $f_0(\D)=(f_T\circ \varphi)(\D)=f(\D)=D$. 
   \end{proof}

 \begin{theorem}\label{thm_u}  Let $f\in S_u$ and let $D=f(\D)$. Then $D$ has the following properties:
 \begin{itemize}
 \item[(a)] $D$ is unbounded.
 \item[(b)] $\partial D$ is connected.
  \item[(c)] Let 
  \[\mathcal{C}(D)=\{C\subset \partial D\,|\, \text{$C$ is connected, unbounded, and closed}\}.\] 
  If  $C_1, C_2\in \mathcal{C}(D)$, then $C_1\subseteq C_2$ or
  $C_2\subseteq C_1$.
 \end{itemize}
 \end{theorem}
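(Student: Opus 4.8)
The plan is to derive (a), (b) and (c) from one structural consequence of uniqueness. Suppose $f\in S_u$ and $D=f(\D)$, and let $(f_t)_{t\geq0}$ be the unique Loewner chain with $f_0=f$. If $E$ is \emph{any} simply connected domain in $\C$ with $D\subsetneq E\subsetneq\C$, then Lemma \ref{ODE_reach2} provides a Loewner chain whose first element is $f$ itself and whose image at some time $T>0$ equals $E$; by uniqueness this chain is $(f_t)$, so $E=f_T(\D)$. Together with $D=f_0(\D)$ this shows that the family $\mathcal{E}:=\{E\mid E\text{ is a simply connected domain in }\C\text{ with }D\subseteq E\subsetneq\C\}$ is contained in $\{f_t(\D)\mid t\geq0\}$, which is totally ordered by inclusion; hence so is $\mathcal{E}$. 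It therefore suffices, for each of (a)--(c), to produce — assuming the negation of the respective property — two members $E_1,E_2$ of $\mathcal{E}$ with $E_1\not\subseteq E_2$ and $E_2\not\subseteq E_1$. For (a) this is immediate: if $D$ is bounded, choose $\rho$ with $\overline{D}\subseteq B(0,\rho)$, then $R>\rho$ and $p\in\C$ with $0<|p|\leq R-\rho$, and put $E_1:=B(0,R)$, $E_2:=B(p,R)$; both belong to $\mathcal{E}$, and two discs of equal radius with distinct centres are never nested.

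For (c), I would suppose $C_1,C_2\in\mathcal{C}(D)$ are incomparable and pick $w_1\in C_1\setminus C_2$, $w_2\in C_2\setminus C_1$. For $i\in\{1,2\}$ let $V_i$ be the connected component of the open set $\C\setminus C_{3-i}$ that contains $D$. Since $w_i\notin C_{3-i}$ (which is closed) and $w_i\in\overline{D}$, a sufficiently small ball about $w_i$ lies in $\C\setminus C_{3-i}$ and meets $D$, so $w_i\in V_i$. Let $E_i:=\widehat{\C}\setminus W_i$ be the simply connected hull of $V_i$, where $W_i$ is the component of $\widehat{\C}\setminus V_i$ containing $\infty$; this is again a domain, it is simply connected, and $D\subseteq V_i\subseteq E_i$ with $w_i\in E_i\setminus D$. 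The decisive observation is that $E_i$ does not absorb $C_{3-i}$: being connected, $C_{3-i}$ lies in a single component of $\widehat{\C}\setminus V_i$, and being unbounded, it must lie in $W_i$; hence $C_{3-i}\cap E_i=\emptyset$, so in particular $E_i\subsetneq\C$ and $E_i\in\mathcal{E}$. Finally $w_1\in E_1$ while $w_1\in C_1$ and $C_1\cap E_2=\emptyset$, so $w_1\in E_1\setminus E_2$; symmetrically $w_2\in E_2\setminus E_1$. Thus $E_1,E_2$ are not nested, contradicting the total order on $\mathcal{E}$.

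For (b), by (a) we may assume $D$ is unbounded; then $\partial D$ is unbounded (for a simply connected $D\neq\C$ a bounded boundary would force $D$ to be bounded or $D=\C$), and $\partial_{\widehat{\C}}D=\partial D\cup\{\infty\}$. It is a standard fact that this set is a continuum (on the sphere, the boundary of a simply connected domain is compact and connected — e.g.\ it is the union of the prime-end impressions over $\partial\D$). Assume $\partial D$ is disconnected. Applying the boundary bumping lemma to the open subset $\partial D=\partial_{\widehat{\C}}D\setminus\{\infty\}$ of this continuum, every connected component of $\partial D$ has $\infty$ in its closure, i.e.\ is unbounded; such a component is also closed in $\C$ and connected, hence lies in $\mathcal{C}(D)$. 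A disconnected $\partial D$ has at least two components, and two distinct ones are disjoint, hence incomparable — contradicting part (c).

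The step I expect to be the main obstacle is the verification in part (c) that forming the simply connected hull does not enlarge $V_i$ all the way up to $\C$; this is precisely where both defining properties of the sets in $\mathcal{C}(D)$ are used — connectedness confines the barrier $C_{3-i}$ to a single complementary component, and unboundedness identifies that component as the one containing $\infty$. Secondary care is needed in the reduction step, to confirm that Lemma \ref{ODE_reach2} really yields a Loewner chain whose first element is $f$ (so that uniqueness may be invoked), and in part (b), to set up the appeal to continuum theory — that $\partial_{\widehat{\C}}D$ is a continuum, and the boundary bumping lemma — correctly.
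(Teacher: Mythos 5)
Your proof is correct and follows essentially the same route as the paper: uniqueness together with Lemma \ref{ODE_reach2} forces the simply connected domains lying between $D$ and $\C$ to be totally ordered by inclusion, and (a)--(c) all follow by exhibiting two non-nested such domains, with your $E_i$ in (c) being exactly the complementary components of $C_1$ and $C_2$ that the paper uses. The only cosmetic differences are that the paper derives (a) from Remark \ref{ODE_reach} (a bounded $D$ can be grown to a disc, which embeds into many chains) rather than from two overlapping discs of equal radius, and that in (b) you spell out the continuum-theoretic justification that the paper compresses into a one-line parenthetical.
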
 
 \begin{proof}${}$
 \begin{itemize}
 \item[(a)]  Assume that $D$ is bounded. Then we can embed $f$ into a Loewner chain $(f_t)$ such that 
  $f_T(\D)=e^T\D$ for some $T>0$, see Remark \ref{ODE_reach}. As $\D$ 
  can be embedded into many Loewner chains, we conclude $f\not\in S_u$, a contradiction.
\item[(c)] Due to (a), the set $\mathcal{C}(D)$ is non-empty. Let $C_1, C_2\in \mathcal{C}(D)$.
  Let $(f_t)$ be the unique Loewner chain with $f_0=f$ and let $D_t=f_t(\D)$.\\
  Let $E_1$ and $E_2$ be the connected component
  of $\C\setminus C_1$ and $\C\setminus C_2$ respectively containing $D$.\\
  Then $E_1$ and $E_2$ are simply connected domains and due to Lemma \ref{ODE_reach2},
  there exist $T_1, T_2$ such that $E_1=D_{T_1}$, $E_2=D_{T_2}$. This implies
  $E_1\subseteq E_2$ or $E_2\subseteq E_1$, say we have $E_1\subseteq E_2$. We need to show that  $C_2\subseteq C_1$.
Assume that this is not true. Then there exists a point $p\in C_2$ and $p\not\in C_1$.\\
Now note that $\partial D\setminus C_1 \subseteq E_1$. Hence $p\in E_1$ and thus $p\in E_2$. But $p$ also belongs to
$C_2$ and thus to the complement of $E_2$, a contradiction. 

 \item[(b)] Assume that $\partial D$ has at least two connected components $C_1, C_2$. Then 
 both components are unbounded, otherwise $D$ would not be simply connected. Hence,
 $C_1,C_2\in \mathcal{C}(D)$ with $C_1\cap C_2 = \emptyset$, a contradiction to (c).
 \end{itemize}
  \end{proof}

 In case $f\in S_u$ maps $\D$ onto the complement of a slit $\gamma$, the elements of 
 $\mathcal{C(D)}$ are simply subslits of $\gamma$. We see that in the general case, each $p\in \partial D$ 
 is connected to $\infty$ within $\partial D\cup\{\infty\}$ 
 in a unique way, i.e. there is a smallest connected closed subset of $\partial D\cup\{\infty\}$
 containing $p$ and $\infty$.

 \subsection{Slit equation}
 Finally, we can look at the special form of Loewner's differential equation appearing in Theorem \ref{loe}.
 	$$S^1_s :=
 	\{f\in S\,|\, \text{$f$ can be embedded into \eqref{slit_LE} for continuous $\kappa$}\}
 	\subsetneq S_d.$$ 
 If $f\in S$ is a  two-slit mapping, then $f\in S_d$ but $f\not\in S^1_s$, which shows that 
 $S^1_s$ is a proper subset of $S_d$.\\

 The class $S^1_s$ (and its variations) has been studied intensively in the literature.  
 
 \begin{itemize}
 \item[\textbullet] Pommerenke characterizes Loewner chains corresponding to $S^1_s$ via the 
 ``local growth property'', see  \cite[Theorem 1]{MR0206245}.
 \item[\textbullet] Every slit mapping belongs to $S^1_s$. 
 However, continuous driving functions can also 
 create non-slit mappings. For example, every $f\in S$ such that $f(\D)$ is a Jordan domain 
 belongs to $S^1_s$ due to the Loewner chain depicted in Figure \ref{jordan_figure}. 
 One can even generate spacefilling curves by continuous $\kappa$, see \cite{LR12}.\\
  The set of all continuous driving functions that correspond to slits in this way is 
not known explicitly.  However, there are several partial results into that direction. 
 Roughly speaking, if $\kappa$ is smooth enough, e.g. continuously differentiable, then 
 $f$ is a slit mapping. We refer to the recent work \cite{ZZ18} and the references therein for such results.
 \item[\textbullet] Loewner's slit equation can be seen as a machinery 
transferring a simple curve $\Gamma$ into a continuous function 
$\kappa:[0,\infty)\to \partial\D$.  
 This process  \[\Gamma \longrightarrow \kappa\]
encodes ``difficult'' two-dimensional objects into one-dimensional ones. 
It seems that this relationship is both rather mysterious and (therefore) quite powerful.
In case of the celebrated 
Schramm-Loewner evolution, certain planar random curves, whose distributions are not easy 
to understand, are simply transferred into $\kappa(t)=e^{i\sqrt{\kappa}B_t}$, 
where $\kappa\geq0$ is a parameter 
and $B_t$ is a standard Brownian motion. For an introduction to SLE, we refer to \cite{lawler05}.

 \end{itemize}

 We obtain a second class by requiring 
 that  \eqref{slit_LE} should hold only almost everywhere.

 	$$S^2_s := \{f\in S\,|\, \text{$f$ can be embedded into \eqref{slit_LE} for 
 	measurable $\kappa$}\}.$$

 Recall that a domain $D\subset \C$ is simply connected if and only if $\hat{\C}\setminus D$ is connected. 
If, in addition, $\hat{\C}\setminus D$ is pathwise connected, then one can erase slits in the complement 
of $D$. Pommerenke constructed a Loewner chain in this way to obtain the following result.
 
 \begin{theorem}[Theorem 2 in \cite{MR0206245}]\label{pom_p}
 Let $f\in S$ such that $\hat{\C}\setminus f(\D)$ is pathwise connected. Then $f\in S^2_s$.
 \end{theorem}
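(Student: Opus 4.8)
The plan is to follow Pommerenke's idea of successively ``erasing slits'' in the complement $K:=\hat\C\setminus f(\D)$. We may assume $f(\D)\ne\C$, so that $K$ is a pathwise connected continuum in $\hat\C$ with $\infty\in K$, and we must produce a Loewner chain $(f_t)$ with $f_0=f$ satisfying \eqref{slit_LE} for almost every $t$ with some measurable $\kappa\colon[0,\infty)\to\partial\D$.

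First I would reduce the problem to a geometric construction by a soft gluing step. Suppose we can find simply connected domains $f(\D)=D_0\subsetneq D_1\subsetneq D_2\subsetneq\cdots$ with $\bigcup_nD_n=\C$ such that, for every $n\ge1$, the passage from $D_{n-1}$ to $D_n$ is realised by Loewner's slit equation: there is a continuous $\kappa_n\colon[0,T_n]\to\partial\D$ and a solution $(h^n_t)_{t\in[0,T_n]}$ of \eqref{slit_LE} with $\kappa=\kappa_n$ whose endpoints $h^n_0,h^n_{T_n}$ are the normalised conformal maps onto $D_{n-1}$ and $D_n$. Since $f\in S$ we have $\mathrm{cap}(D_0)=1$, so $T_n=\log\big(\mathrm{cap}(D_n)/\mathrm{cap}(D_{n-1})\big)$ and $\sigma_n:=T_1+\dots+T_n=\log\mathrm{cap}(D_n)$. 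Because $\bigcup_nD_n=\C$, Carath\'{e}odory's kernel theorem forces $\mathrm{cap}(D_n)\to\infty$, hence $\sigma_n\to\infty$. Therefore $\kappa(t):=\kappa_n(t-\sigma_{n-1})$ for $t\in[\sigma_{n-1},\sigma_n)$ is a well-defined map $[0,\infty)\to\partial\D$ which is continuous on each piece, hence measurable, and $p(t,z):=\frac{\kappa(t)-z}{\kappa(t)+z}$ is a Herglotz vector field. By the standard correspondence between Herglotz vector fields and Loewner chains, together with its uniqueness \cite[Chapter 6]{Pom:1975}, the generated chain $(f_t)$ restricts on each interval $[\sigma_{n-1},\sigma_n]$ to $(h^n_{\,t-\sigma_{n-1}})$, whence $f_{\sigma_n}(\D)=D_n$; in particular $f_0(\D)=D_0=f(\D)$ and $f_0=f$. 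By construction $(f_t)$ satisfies \eqref{slit_LE} with the measurable $\kappa$, so $f\in S^2_s$. Thus everything reduces to the construction of the exhaustion $(D_n)$ and the driving functions $\kappa_n$.

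For this geometric core the pathwise connectedness of $K$ is used essentially. Fix a countable dense set $\{w_1,w_2,\dots\}\subseteq K\setminus\{\infty\}$. One builds the $D_n$ by alternating two kinds of moves, each realised by the slit equation with a continuous driving function. A move of the first kind retracts a free Jordan arc from the boundary: if $\hat\C\setminus D_{n-1}$ contains a Jordan arc meeting the rest of the complement only at one endpoint, retracting it tip-first enlarges $D_{n-1}$ through a family of slit complements, which is governed by \eqref{slit_LE} with continuous $\kappa_n$ (this is Loewner's Theorem~\ref{loe}; see also \cite[Theorem 2.22]{Bo}). A move of the second kind ``fills in'' a two-dimensional piece of $K$ accessible from $\partial D_{n-1}$: here one grows from a suitable boundary point a connected hull sweeping out the prescribed piece, using that \eqref{slit_LE} with a sufficiently wild \emph{continuous} driving function produces such fat hulls -- the same mechanism that underlies space-filling curves, cf.\ \cite{LR12}. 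Pathwise connectedness guarantees that every $w_n$ is joined to $\infty$ inside $K$, so one can route the successive hulls through $K$ and arrange $w_n\in D_n$; since $\{w_n\}$ is dense in $K$ and $(D_n)$ is increasing, $\bigcup_nD_n\supseteq K\setminus\{\infty\}$, and with $D_0\supseteq f(\D)$ this yields $\bigcup_nD_n=\C$.

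The hard part is precisely this geometric construction: routing a connected hull starting at $\partial D_{n-1}$ through an arbitrary pathwise connected continuum $K$ -- which may be two-dimensional and need not be locally connected -- so that the domains $D_n$ increase, in the Carath\'{e}odory kernel sense, all the way up to $\C$, while each single step stays of the slit form \eqref{slit_LE} with continuous driving function. Reconciling ``the pieces removed so far are dense in $K$'' with ``the remaining complements shrink down to $f(\D)$'' is what forces the hulls to be genuinely fat rather than slits, and it is here that path-connectedness enters. Carrying this out rigorously is the content of Pommerenke's original construction in \cite[Theorem 2]{MR0206245}.
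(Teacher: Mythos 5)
Your soft gluing step (concatenating countably many continuous driving functions into one measurable $\kappa$, with the total time $\sum_n T_n=\lim_n\log\mathrm{cap}(D_n)=\infty$ forced by Carath\'eodory's kernel theorem) is correct and is indeed part of the standard argument. But the geometric core --- the only nontrivial content of the theorem --- is not established, and the route you propose for it would not work as described. Your ``move of the second kind'' asks to sweep out a \emph{prescribed} two-dimensional piece of $K$ by the hull of a Loewner evolution \eqref{slit_LE} with a single continuous driving function. The space-filling examples of \cite{LR12} show that \emph{some} continuous $\kappa$ produce fat hulls; they give no control whatsoever over \emph{which} fat set is swept, and no argument is known (nor offered by you) that an arbitrary accessible two-dimensional portion of an arbitrary pathwise connected continuum $K$ can be realized this way. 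Since you insist that every individual step be a continuous-driving evolution with a prescribed target $D_n$, your first step already has to solve this inverse problem whenever $K$ has interior points near $\partial f(\D)$. You concede as much in your last paragraph and defer to \cite{MR0206245}; as a proof, that is a gap, not a reduction.

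It is also worth noting that Pommerenke's actual construction avoids this difficulty entirely by running in the opposite direction: one never fills in a fat piece in a single step. Using pathwise connectedness, join a countable dense subset $\{w_n\}$ of $K\setminus\{\infty\}$ to $\infty$ by Jordan arcs $C_n\subset K$, cut each $C_n$ at its \emph{first} hitting of $C_1\cup\dots\cup C_{n-1}$ (so that each step attaches a single Jordan slit at one endpoint), and let $G_n$ be the component of $\hat\C\setminus(C_1\cup\dots\cup C_n)$ containing $0$. Then $G_1\supset G_2\supset\dots$ are slit domains, each transition $G_n\to G_{n+1}$ is a genuine single-slit deformation governed by \eqref{slit_LE} with continuous driving (Loewner's Theorem~\ref{loe}), and density of $\{w_n\}$ gives $f(\D)=\ker(G_n)$, i.e.\ $f$ is recovered as a Carath\'eodory kernel limit rather than as the endpoint of a step. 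The junction times then accumulate only at $t=0$, and your gluing lemma (applied to this decreasing decomposition, plus one final single-slit evolution from $G_1$ up to $\C$) yields the measurable $\kappa$ and the a.e.\ validity of \eqref{slit_LE}. In short: the ``fatness'' of $K$ must be absorbed into a limit, not into any single continuous-driving step; your exhaustion from the inside puts the hard analysis exactly where it cannot be done.
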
 
 
\subsection{Problems} 

Thinking of the idea of the proof of Theorem \ref{pom_p}, we are led to the following question.

 \begin{problem}
   Let $f\in S$ such that $\hat{\C}\setminus f(\D)$ is pathwise connected. Is it possible to embed 
  $f$ into a differentiable Loewner chain by simultaneously erasing slits in the complement?
 \end{problem}

 	 \begin{problem}
  Pommerenke asks in \cite{MR0206245}: Is $S=S^2_s$? 
 \end{problem}

This question is interesting from a control theoretic point of view.

Denote by $\mathcal{P}$ the Carath\'{e}odory class of all holomorphic functions
$p:\D\to \C$ with $\Re(p(z))>0$ for all  $z\in\D$ and $p(0)=1$.
The class $\mathcal{P}$ can be characterized by the Riesz-Herglotz representation formula:
\begin{equation*}   \mathcal{P} = \left\{
\int_{\partial \D} \frac{u + z}{u -z} \, \mu(du) \,|\, \mu \; 
\text{is a probability measure on $\partial \D$} \right\}.  \end{equation*}
The extreme points of the class $\mathcal{P}$ are thus given by all functions of the form
$\frac{u+z}{u-z}$ for some $u\in \partial \D$. 
Hence, in view of \eqref{Loewner_ODE}, 
a result like $S=S^2_s$ could be interpreted as a  ``bang-bang principle'' for the Loewner 
equation.

 \begin{problem}
 Let $f\in S_u$ be embedded into its unique Loewner chain $(f_t)$. 
 How does the Loewner equation for $(f_t)$ look like?
 \end{problem}

  Note that an example of $f\in S_u$ whose Loewner equation does not have the form 
  \eqref{slit_LE} for measurable $\kappa$ would prove $S\not=S^2_s$. 
	
 \begin{problem}
Let $f\in S$ such that $D=f(\D)$ satisfies $(a)$-$(c)$ from Theorem \ref{thm_u}. Is it true that $f\in S_u$?
\end{problem}

 \begin{problem}
Is it true that the set $S_u\cap S_d$ contains only slit-mappings?
 \end{problem}
 
     \begin{problem}
Let $f\in S$ but $f\not\in S_u$. Is it true that $f$ can be embedded into infinitely 
(uncountably) many Loewner chains?
 \end{problem}
 
\newpage 

\section{Measures related to univalent slit mappings}\label{Sec_mon}

Holomorphic functions $f:\D\to\C$ also arise in probability theory. 
Such mappings encode probability measures $\mu$ on the unit circle $\partial \D$ or on $\R$. 
The univalence of such functions has a certain meaning in non-commutative probability theory, 
which will be explained in Section \ref{qu}.\\

This correspondence motivates two questions:\\
How can the property that $f(\D)$ has the form $\D\setminus \gamma$, where $\gamma$ is a simple curve, 
be translated into properties of the measure $\mu$?\\
How are the questions from Section \ref{Sec_emb} translated if we pass from non-commutative to classical probability theory?\\

Instead of the unit disc and the normalization $f(0)=0$, we prefer to use the upper half-plane $\Ha=\{z\in\C\,|\, \Im(z)>0\}$ and a normalization 
at the boundary point $\infty$. Then the probability measures will be supported on $\partial\Ha=\R$. \\

We give a partial answer to the first question in  Section \ref{p}. 
In Section \ref{qu} we address the second question
and explain the deeper connection of univalent mappings to non-commutative probability theory.

\subsection{Univalent Cauchy transforms}\label{p}

Let $\gamma:[0,1]\to \overline{\Ha}$ be a simple curve with $\gamma(0)\in \R$ and 
$\gamma(0,1]\subset\Ha$. Then there exists a unique conformal mapping 
$f:\Ha\to\Ha\setminus \gamma(0,1]$ having the 
 \emph{hydrodynamic normalization} 
\[f(z) = z - \frac{c}{z} + \LandauO(|z|^{-2})
\]
for some $c> 0$ as $z\to\infty$. The value $c$ is also called the \emph{half-plane capacity} of the slit.\\

The Cauchy transform (or Stieltjes transform) of a probability measure $\mu$ on $\R$ is given by 
$$G_{\mu}(z)=\int_\R\frac{1}{z-t}\,\mu(dt), \qquad z\in\C\setminus \R.$$
We define the 
$F$--transform of $\mu$ simply as $F_\mu:\Ha\to\Ha, F_\mu(z) := 1/G_\mu(z)$. $F$-transforms can be characterized in the following way.

\begin{theorem}[Proposition 2.1 and 2.2 in \cite{M92}] Let 
$F\colon \Ha \to \overline{\Ha}$ be holomorphic. Then the followings are equivalent.
\begin{itemize}
\item[(a)] There exists a probability measure $\mu$ on $\R$ such that $F=F_\mu$.
\item[(b)] $\lim_{y\to \infty}\frac{F(i y)}{i y}=1$.
\item[(c)] $F$ has the Pick-Nevanlinna representation
\begin{equation*}
F(z) =  z + b + \int_{\R}\frac{1+tz}{t-z}\rho({\rm d}t),
\end{equation*}
where $b \in \R$ and $\rho$ is a finite, non-negative Borel measure on $\R$.
\end{itemize}
\end{theorem}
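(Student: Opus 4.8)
The plan is to establish the two equivalences (a)$\Leftrightarrow$(b) and (b)$\Leftrightarrow$(c), the one external ingredient being the Nevanlinna representation theorem for Pick functions: every holomorphic $\phi\colon\Ha\to\overline{\Ha}$ has the form $\phi(z)=b_{0}+b_{1}z+\int_{\R}\frac{1+tz}{t-z}\,\nu(dt)$ with $b_{0}\in\R$, $b_{1}=\lim_{y\to\infty}\phi(iy)/(iy)\ge 0$, and $\nu$ a finite non-negative Borel measure on $\R$. First I would dispose of the two ``downward'' implications, which are routine dominated-convergence computations. For (a)$\Rightarrow$(b): since $\Im(G_{\mu}(iy))<0$ one may write $F_{\mu}(iy)/(iy)=\bigl(iy\,G_{\mu}(iy)\bigr)^{-1}$, and $iy\,G_{\mu}(iy)=\int_{\R}\frac{iy}{iy-t}\,\mu(dt)\to\mu(\R)=1$ since the integrand is bounded by $1$ and tends pointwise to $1$. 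For (c)$\Rightarrow$(b): dividing the representation by $iy$, the part $(z+b)/z$ tends to $1$ and $\frac{1}{iy}\int_{\R}\frac{1+tz}{t-z}\,\rho(dt)\to 0$ because $\rho$ is finite.

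Next I would handle (b)$\Rightarrow$(c). Condition (b) excludes constant $F$ (constants give $F(iy)/(iy)\to 0$), so by the open mapping theorem $F(\Ha)$ is open in $\C$; being contained in $\overline{\Ha}$ it cannot contain a real point and hence lies in $\Ha$, i.e. $F$ is a Pick function. Applying the Nevanlinna representation to $F$ and reading off $b_{1}=\lim_{y\to\infty}F(iy)/(iy)=1$ from (b) yields exactly the representation in (c), with $b:=b_{0}$ and $\rho:=\nu$.

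The substantive step will be (b)$\Rightarrow$(a). Under (b) we have just seen that $F\colon\Ha\to\Ha$, so $F$ is zero-free and $G:=1/F$ is a holomorphic map $\Ha\to\{\Im(w)<0\}$; hence $-G$ is a Pick function with Nevanlinna representation $-G(z)=\alpha+\beta z+\int_{\R}\frac{1+tz}{t-z}\,\tau(dt)$. The normalization enters through $iy\,G(iy)=iy/F(iy)=\bigl(F(iy)/(iy)\bigr)^{-1}\to 1$. From $-G(iy)/(iy)=y^{-2}\,(iy\,G(iy))\to 0$ I get $\beta=0$; taking real parts in $iy\,(-G)(iy)\to -1$ and using $\Im\bigl(\tfrac{1+ity}{t-iy}\bigr)=\tfrac{y(1+t^{2})}{t^{2}+y^{2}}$ gives $y^{2}\int_{\R}\tfrac{1+t^{2}}{t^{2}+y^{2}}\,\tau(dt)\to 1$, and since $\tfrac{y^{2}}{t^{2}+y^{2}}\uparrow 1$ as $y\to\infty$, monotone convergence forces $\int_{\R}(1+t^{2})\,\tau(dt)=1<\infty$. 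Thus $\mu(dt):=(1+t^{2})\,\tau(dt)$ is a probability measure and $\tau$ has finite first moment, so the identity $\frac{1+tz}{t-z}=-\frac{1+t^{2}}{z-t}-t$ recasts the representation as $-G(z)=\alpha'-G_{\mu}(z)$ with $\alpha':=\alpha-\int_{\R}t\,\tau(dt)\in\R$. Finally $G(iy)\to 0$ (from $iy\,G(iy)\to 1$) and $G_{\mu}(iy)\to 0$ ($\mu$ being finite) force the constant $\alpha'$ to vanish, whence $G=G_{\mu}$ and $F=1/G=F_{\mu}$; this reproves Maassen's Propositions 2.1--2.2.

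The only genuinely delicate point, which I would flag as the main obstacle, is exactly this last conversion of a general Pick-type representation (whose measure $\tau$ is merely finite) into an honest Cauchy transform (whose measure $\mu$ is finite and carries no $(1+t^{2})$ weight): a priori $(1+t^{2})\tau$ could have infinite total mass and the affine constant $\alpha'$ could be nonzero, and the whole content of the argument is that the single normalization in (b) is simultaneously strong enough to kill the linear term $\beta z$, to identify $\mu(\R)=\lim_{y\to\infty}y^{2}\int_{\R}\tfrac{1+t^{2}}{t^{2}+y^{2}}\,\tau(dt)=1$ as finite, and then to force $\alpha'=0$.
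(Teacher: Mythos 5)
Your proof is correct. Note that the paper does not prove this statement at all---it is imported verbatim from Maassen's Propositions 2.1 and 2.2---so there is no internal proof to compare against; your route (Nevanlinna--Pick representation applied to $F$ for (b)$\Rightarrow$(c) and to $-1/F$ for (b)$\Rightarrow$(a), with the single normalization $\lim_{y\to\infty}F(iy)/(iy)=1$ killing the linear term, forcing $\int_{\R}(1+t^{2})\,\tau(dt)=1$ via monotone convergence, and then eliminating the additive constant) is exactly the classical argument of \cite{M92} and the standard references. The only step worth writing out in full is the domination underlying (c)$\Rightarrow$(b) and the identification $b_{1}=\lim_{y\to\infty}\phi(iy)/(iy)$, namely the elementary bound $\bigl|\tfrac{1}{iy}\cdot\tfrac{1+ity}{t-iy}\bigr|\le\sqrt{2}$ for all $t\in\R$ and $y\ge 1$, which justifies the dominated convergence you invoke with ``because $\rho$ is finite.''
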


We conclude that every univalent slit mapping $f:\Ha\to\Ha\setminus \gamma(0,1]$ with hydrodynamic normalization 
is the $F$-transform of a probability measure $\mu$, i.e. $f=F_\mu$. We are thus led to the problem of characterizing those $\mu$ whose 
$F$-transforms are univalent slit mappings.\\

Consider again an arbitrary probability measure $\mu$ on $\R$.
Due to Fatou's theorem, the following radial limits exist almost everywhere on $\R$:
\begin{equation*}\hat{\mathcal{H}}_\mu(x) := \lim_{\eps\downarrow 0} \hat{\mathcal{H}}_{\eps,\mu}(x), \quad
\hat{\mathcal{H}}_{\eps,\mu}(x):=\frac {1}{\pi }\Re\; G_\mu(x+i\eps).
\end{equation*}

The Hilbert transform of $\mu$ is defined by 
\begin{equation*}\mathcal{H}_\mu(x) := 
\lim_{\eps\downarrow 0}\mathcal{H}_{\eps,\mu}(x), \quad 
\mathcal{H}_{\eps,\mu}(x) := \frac {1}{\pi }\int_{|x-t|> \eps}\frac {1}{x-t}\,\mu(dt).
\end{equation*}
$\mathcal{H}_\mu$ is also defined for almost every $x\in\R$.
The Sokhotski-Plemelj formula implies that $\mathcal{H}_\mu(x)$ 
and $\hat{\mathcal{H}}_\mu(x)$ coincide. As this equality is usually stated to hold almost everywhere on $\R$ (see \cite[Theorem F.3]{MR2953553} or \cite[Sections 2.5, 3.8]{CMR06}),
 we include the short proof of the pointwise equality needed in our situation. 

\begin{lemma}\label{equal_Hilbert}Let $\mu$ be an absolutely continuous probability measure with compact
support and continuous density $f(x)dx$. 
Let $x\in\R$. Then $\mathcal{H}_\mu(x)$ exists if and only if
 $\hat{\mathcal{H}}_\mu(x)$ exists. \\
If these limits exist, then $\mathcal{H}_\mu(x)=\hat{\mathcal{H}}_\mu(x)$.
\end{lemma}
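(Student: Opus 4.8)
The plan is to work with the explicit integral representations of $\mathcal{H}_{\eps,\mu}(x)$ and $\hat{\mathcal{H}}_{\eps,\mu}(x)$ and show that their difference tends to $0$ as $\eps\downarrow 0$, uniformly enough that the existence of one limit forces the existence of the other, with equal value. Fix $x\in\R$. Writing $\mu(dt)=f(t)\,dt$, we have
\[
\hat{\mathcal{H}}_{\eps,\mu}(x)=\frac1\pi\int_\R \frac{x-t}{(x-t)^2+\eps^2}\,f(t)\,dt,
\qquad
\mathcal{H}_{\eps,\mu}(x)=\frac1\pi\int_{|x-t|>\eps}\frac{1}{x-t}\,f(t)\,dt.
\]
First I would split the first integral at $|x-t|=\eps$. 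On the region $|x-t|>\eps$ the two kernels differ by $\frac{x-t}{(x-t)^2+\eps^2}-\frac1{x-t}=\frac{-\eps^2}{(x-t)[(x-t)^2+\eps^2]}$, whose absolute value is bounded by $\frac{\eps^2}{|x-t|^3}$; since $f$ is bounded (continuous with compact support) and $\int_{|x-t|>\eps}\frac{\eps^2}{|x-t|^3}\,dt = O(1)\cdot\eps^2\cdot\eps^{-2}$ — more precisely this integral equals a constant times $1$, so one needs slightly more care — I would instead note $\int_{\eps<|x-t|<1}\frac{\eps^2}{|x-t|^3}\,dt\to 0$ and $\int_{|x-t|\ge 1}\frac{\eps^2}{|x-t|^3}\,dt\to 0$ by dominated convergence, using compact support to discard the far tail outright. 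So the contribution of the region $|x-t|>\eps$ to $\hat{\mathcal{H}}_{\eps,\mu}(x)-\mathcal{H}_{\eps,\mu}(x)$ vanishes as $\eps\downarrow0$.

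Next I would handle the near region $|x-t|\le\eps$, which appears only in $\hat{\mathcal{H}}_{\eps,\mu}$. There the kernel $\frac{x-t}{(x-t)^2+\eps^2}$ is bounded in absolute value by $\frac1{2\eps}$ (AM–GM) and is an \emph{odd} function of $t-x$. Hence
\[
\frac1\pi\int_{|x-t|\le\eps}\frac{x-t}{(x-t)^2+\eps^2}\,f(t)\,dt
=\frac1\pi\int_{|x-t|\le\eps}\frac{x-t}{(x-t)^2+\eps^2}\,\bigl(f(t)-f(x)\bigr)\,dt,
\]
because the $f(x)$ piece integrates to zero by oddness. Now use continuity of $f$ at $x$: given $\delta>0$ there is $\eps_0$ with $|f(t)-f(x)|<\delta$ for $|t-x|\le\eps_0$, and then the displayed quantity is bounded by $\frac\delta\pi\int_{|x-t|\le\eps}\frac{|x-t|}{(x-t)^2+\eps^2}\,dt=\frac\delta\pi\cdot\ln 2$ for $\eps\le\eps_0$. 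So this near-region term is $O(\delta)$, hence tends to $0$.

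Combining the two regions gives $\hat{\mathcal{H}}_{\eps,\mu}(x)-\mathcal{H}_{\eps,\mu}(x)\to 0$ as $\eps\downarrow 0$. Therefore $\lim_{\eps\downarrow0}\hat{\mathcal{H}}_{\eps,\mu}(x)$ exists if and only if $\lim_{\eps\downarrow0}\mathcal{H}_{\eps,\mu}(x)$ exists, and when they do the two limits agree; this is exactly the assertion $\hat{\mathcal{H}}_\mu(x)=\mathcal{H}_\mu(x)$. The one point that needs honest estimation rather than a one-line bound is the far part of the difference of kernels on $|x-t|>\eps$: the naive bound $\eps^2/|x-t|^3$ integrated over all $|x-t|>\eps$ is merely $O(1)$, not $o(1)$, so the argument genuinely uses the compact support of $\mu$ (to cut off large $|x-t|$) together with dominated convergence on the bounded annulus, and this is the step I would be most careful to write out. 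The oddness trick in the near region is the other essential ingredient, but it is clean once spotted.
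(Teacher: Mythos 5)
Your near-region argument is fine, but the far-region step contains a genuine error. You claim $\int_{\eps<|x-t|<1}\frac{\eps^2}{|x-t|^3}\,dt\to 0$; in fact a direct computation gives $2\eps^2\int_\eps^1 s^{-3}\,ds=1-\eps^2\to 1$. Dominated convergence cannot rescue this: on $|x-t|>\eps$ the only uniform majorant of $\eps^2/|x-t|^3$ is $1/|x-t|$, which is not integrable near $t=x$, and the mass of the kernel difference concentrates precisely at $|x-t|\approx\eps$, i.e.\ near $x$ and (generically) inside $\supp\mu$ --- so the compact support of $\mu$ does not help either. As written, your estimate of the far region only yields an $O(1)$ bound, so the proof does not close.

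The fix is the same cancellation you already exploit in the near region: the kernel difference $\frac{x-t}{(x-t)^2+\eps^2}-\frac{1}{x-t}=\frac{-\eps^2}{(x-t)[(x-t)^2+\eps^2]}$ is \emph{odd} in $x-t$ and has total absolute mass $2\int_1^\infty\frac{dv}{v(v^2+1)}=\ln 2$ over $|x-t|>\eps$ (substitute $x-t=\eps v$), with that mass concentrating at $t=x$ as $\eps\downarrow0$. Hence its integral against $f(t)$ equals its integral against $f(t)-f(x)$, which tends to $0$ by continuity of $f$ at $x$ and boundedness of $f$. The paper packages both regions at once: it substitutes $t=x+\eps u$, so that $\hat{\mathcal H}_{\eps,\mu}(x)-\mathcal H_{\eps,\mu}(x)=\frac1\pi\int_\R g(u)f(x+\eps u)\,du$ with the fixed integrable kernel $g(u)=\frac{1}{i-u}+\chi_{|u|>1}\frac1u$ satisfying $\int_\R g=-i\pi$; dominated convergence applied to $f(x+\eps u)-f(x)$ then gives that the difference converges to $-i f(x)$, whose real part is $0$. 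That route avoids the annulus estimate entirely and is the cleanest way to repair your argument.
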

\begin{proof}
First, we consider the relevant integrals and change $t$ to 
$t=x+\eps u$, which gives
\begin{eqnarray*}&& \int_{\R}\frac1{x-t+i\eps} f(t)\,dt - 
\int_{|x-t|>\eps} \frac1{x-t} f(t)\,dt \\
&=& 
\int_\R \frac{f(x+\eps u)\eps du}{-\eps u+i\eps} - 
\int_{|\eps u|>\eps}\frac{f(x+\eps u)\eps du}{-\eps u}=
\int_\R \left(\frac{1}{i-u} + 
\chi_{|u|>1}\frac{1}{u}\right) \,f(x+\eps u)du.
\end{eqnarray*}

Denote the function in  parentheses by $g(u)$. 
For $|u|\leq 1$, we have $|g(u)|=\frac{1}{|i-u|}\leq 1$, 
and if $|u|>1$, then 
$|g(u)|=\frac{1}{|i-u||u|}\leq \frac1{|u|^2}.$ So $g$ is integrable and a direct calculation 
yields
 $\int_\R g(u)du=-i\pi$. 
Let $[a,b]$ be a compact interval containing the support of $\mu$. Then 
$|f(x+\eps u) - f(x)|$ is uniformly bounded by $2\max_{t\in[a,b]}|f(t)|$ and the dominated convergence theorem implies that 
\[\int_\R (f(x+\eps u) - f(x)) g(u)\,du \to 0\]
as $\eps \to 0$. Hence, 
$\Re \int_\R  f(x+\eps u)  g(u)\,du \to 0$ as $\eps \to 0$.
So,  $\lim_{\eps\downarrow 0} \hat{\mathcal{H}}_{\eps,\mu}(x)$ exists if and only if $\lim_{\eps\downarrow 0} \mathcal{H}_{\eps,\mu}(x)$ exists, and if these limits exist, then they coincide.
\end{proof}

We first look at the case where the slit does not start at $0$.

\begin{theorem}\label{slit_measures} Let $\mu$ be a probability measure on $\R$ such that $F_\mu$ is univalent. \\
Then $F_\mu$ maps $\Ha$ conformally onto $\Ha\setminus \gamma$, 
where $\gamma$ is a slit starting at $C\in\R\setminus\{0\}$, if and only if the following conditions 
are satisfied:
\begin{itemize}
 \item[(a)] $\supp \mu= \{x_0\}\cup[a,b]$, where $\mu$ 
 has a continuous density $d(x)$ on the compact interval $[a,b]$ and an atom at some $x_0\in \R\setminus [a,b].$  
 Furthermore, $d(a)=d(b)=0$ and $d(x)>0$ in $(a,b)$.
 \item[(b)] $\mathcal{H}_\mu$ is defined and continuous on $\R\setminus\{x_0\}$ with $\mathcal{H}_\mu(a)=\mathcal{H}_\mu(b)=\frac1{\pi C}$.
 \item[(c)] There exists a decreasing homeomorphism $h:[a,b]\to[a,b]$ with 
 \[d(h(x))=d(x) \qquad \text{and} \qquad \mathcal{H}_\mu(h(x))=\mathcal{H}_\mu(x)\]
 for all $x\in [a,b]$. 
 \end{itemize}
\end{theorem}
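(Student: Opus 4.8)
The plan is to analyze the boundary behavior of $F_\mu$ on $\R$ and translate the geometric fact ``$F_\mu(\Ha) = \Ha \setminus \gamma$ with $\gamma$ a slit starting at $C \neq 0$'' into measure-theoretic statements via the Pick–Nevanlinna representation and the Sokhotski–Plemelj formula. The key observation is that for $x \in \R$, the limit $F_\mu(x+i0)$ exists and lies on $\partial(\Ha \setminus \gamma) = \R \cup \gamma$, and it lies in the open upper half-plane precisely when $x$ maps onto the (doubled) slit $\gamma \setminus \{C\}$. Since $F_\mu^{-1}$ extends continuously to the slit with each interior point of $\gamma$ having two preimages, the set of such $x$ is an open interval $(a,b)$, giving the support structure in (a); the atom at $x_0$ comes from the requirement that $F_\mu$ be an $F$-transform (so $F_\mu(\Ha)$ omits more than a slit through $C$ only if there is a point mass, which forces a pole of $G_\mu$, i.e. a zero of $F_\mu$, at $x_0$ — actually an atom of $\mu$ at $x_0 \in \R \setminus [a,b]$). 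Writing $F_\mu = u + iv$ on $\R$ via radial limits, the real part is $u(x) = x + b - \pi \mathcal{H}_\mu(x) \cdot(\text{const})$ — more precisely, from $F_\mu = 1/G_\mu$ and the Plemelj formula $G_\mu(x+i0) = \pi \mathcal{H}_\mu(x) + i\pi d(x)$ (using Lemma \ref{equal_Hilbert} for the pointwise identification of the two Hilbert transforms on $(a,b)$), we get $F_\mu(x+i0) = \frac{1}{\pi(\mathcal{H}_\mu(x) + i\, d(x))}$.

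First I would establish the ``only if'' direction. Given $F_\mu(\Ha) = \Ha \setminus \gamma$, use the fact (standard for conformal slit maps with hydrodynamic normalization, cf. the discussion around Theorem \ref{loe}) that $F_\mu$ extends continuously to $\overline{\Ha}$, that $F_\mu^{-1}$ extends continuously to the slit, and that each point of the relative interior of $\gamma$ has exactly two preimages on $\R$ while the tip has one. Let $(a,b)$ be the open set of $x$ with $F_\mu(x) \in \gamma \setminus \{C\}$; connectedness of the slit forces this to be an interval, and the two-preimage property gives the decreasing homeomorphism $h:[a,b]\to[a,b]$ swapping the two preimages of each slit point, with $F_\mu(h(x)) = F_\mu(x)$ — from the formula $F_\mu(x+i0) = 1/(\pi(\mathcal{H}_\mu(x)+i d(x)))$ this immediately yields $\mathcal{H}_\mu(h(x)) = \mathcal{H}_\mu(x)$ and $d(h(x)) = d(x)$, which is (c). On $\R \setminus [a,b]$ (minus possibly one point) $F_\mu$ is real, so $\Im F_\mu(x+i0) = \pi v$, hence $d(x) = 0$ there and (away from $x_0$) $\mathcal{H}_\mu$ is real and continuous; continuity of $d$ up to $a$ and $b$ with $d(a) = d(b) = 0$ follows because $\gamma$ reaches $\R$ only at its starting point $C$, and $\mathcal{H}_\mu(a) = \mathcal{H}_\mu(b) = \frac{1}{\pi C}$ because $F_\mu(a+i0) = F_\mu(b+i0) = C$ (the slit's endpoint on $\R$) and $C = 1/(\pi \mathcal{H}_\mu(a))$. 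Finally, $F_\mu$ being real-valued and $F_\mu(\Ha)$ omitting only a bounded slit forces $F_\mu$ to have a zero on $\R$, hence $G_\mu$ a pole, hence $\mu$ an atom at that point $x_0 \notin [a,b]$; and since $G_\mu$ is otherwise continuous across $\R \setminus (\{x_0\} \cup [a,b])$ the support there is just $\{x_0\} \cup [a,b]$, giving (a).

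For the ``if'' direction I would run the argument in reverse: conditions (a) and (b) let me compute $F_\mu(x+i0)$ explicitly via $1/(\pi(\mathcal{H}_\mu + i d))$ on $(a,b)$ and via a real continuous formula with a single pole at $x_0$ elsewhere, showing $F_\mu(\R) = \R \cup \Gamma$ where $\Gamma$ is the curve $x \mapsto 1/(\pi(\mathcal{H}_\mu(x) + i d(x)))$ for $x \in (a,b)$, with endpoints both equal to $C = 1/(\pi \mathcal{H}_\mu(a)) = 1/(\pi \mathcal{H}_\mu(b))$ by (b). Condition (c) guarantees that $\Gamma$ is traced twice — forward by $x$ and backward by $h(x)$ — so that as a \emph{set} $\Gamma$ is a simple arc (no genuine double points beyond this two-to-one parametrization), and one checks it is a slit starting at $C$. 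Then $F_\mu$ univalent plus $\partial F_\mu(\Ha) = \R \cup \Gamma$ forces $F_\mu(\Ha) = \Ha \setminus \Gamma$ by a connectedness/argument-principle argument (the image is an open connected subset of $\Ha$ with that boundary, and the hydrodynamic normalization at $\infty$ pins down which side).

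The main obstacle I anticipate is the rigorous boundary-correspondence input: justifying that a univalent $F_\mu$ with image $\Ha \setminus \gamma$ extends continuously to $\overline\Ha$ with the stated two-to-one behavior on the slit, and conversely that the set-theoretic description of $F_\mu(\R)$ plus univalence pins down the image domain. This is where one must be careful about the atom at $x_0$ (which makes $F_\mu$ non-injective in the naive boundary sense, since $F_\mu(x_0) = 0$ is a genuine boundary value but $0$ need not be an endpoint of $\gamma$ unless the slit passes near the origin) and about local connectivity of $\partial(\Ha\setminus\gamma)$; the topological-sine-type pathologies from the proof of Theorem \ref{S_d} are exactly what one must exclude here, and the hypothesis that $\gamma$ is a \emph{simple curve} (a Jordan arc) together with $d$ continuous is what does the excluding. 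Everything else — the Plemelj computation, Lemma \ref{equal_Hilbert}, and the symmetry bookkeeping via $h$ — is routine once this boundary dictionary is in place.
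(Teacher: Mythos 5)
Your ``only if'' direction matches the paper's argument essentially step for step: continuous extension of $F_\mu$ to $\overline{\Ha}$ via local connectivity of $\partial(\Ha\setminus\gamma)$, the welding homeomorphism $h$ from the two-sided boundary correspondence of the slit, Stieltjes--Perron inversion for the density and for the atom at the zero $x_0$ of $F_\mu$, and Lemma \ref{equal_Hilbert} to identify $\hat{\mathcal{H}}_\mu$ with $\mathcal{H}_\mu$ pointwise. Apart from a sign slip --- Sokhotski--Plemelj gives $G_\mu(x+i0)=\pi\mathcal{H}_\mu(x)-i\pi d(x)$, hence $F_\mu(x+i0)=1/(\pi(\mathcal{H}_\mu(x)-i\,d(x)))$ --- that half is fine.

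The genuine gap is in the converse, at the sentence ``Condition (c) guarantees that $\Gamma$ is traced twice \dots\ so that as a set $\Gamma$ is a simple arc.'' The symmetry $\gamma\circ h=\gamma$ only says that the arc parametrized on $[a,u]$ coincides, as a set, with the arc parametrized on $[u,b]$ traversed backwards; it does nothing to exclude self-intersections of $\gamma|_{[a,u]}$ itself, so (c) alone cannot yield simplicity. The paper extracts simplicity from the \emph{univalence} of $F_\mu$: once $F_\mu$ is known to extend continuously to $\overline{\Ha}$, Pommerenke's Proposition 2.5 shows that any boundary point of $D=F_\mu(\Ha)$ with at least two preimages on $\R$ is a cut point of the boundary continuum; by the $h$-symmetry every $p\in\gamma(a,u)$ has two preimages, hence is a cut point of $\gamma[a,u]$, and Ayres' theorem on curves all of whose interior points are cut points then forces $\gamma[a,u]$ to be a simple arc. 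A second, smaller gap: you assert $\partial D=\R\cup\Gamma$ and the continuous extension of $F_\mu$ without justification; the paper proves this by showing that the set of accessible boundary points of $D$ equals the set of radial limits $\lim_{\eps\downarrow 0}F_\mu(x+i\eps)$ (which exist for every $x$ by (a),(b)), that this set $\R\cup\gamma[a,b]$ is closed, and that accessible points are dense in $\partial D$, whence $\partial D=\R\cup\gamma[a,b]$ is locally connected. Both inputs use the univalence hypothesis in an essential way that your sketch leaves implicit; without them the converse does not close.
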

\begin{proof}
``$\Longrightarrow$'':\\
As the domain $\Ha\setminus \gamma$ has a locally connected boundary, the mapping $F_\mu$ can be extended continuously
to $\overline{\Ha};$ see \cite[Theorem 2.1]{MR1217706}.\\
There exists an interval $[a,b]$ such that $F_\mu([a,b])=\gamma$ and there is a unique $u\in (a,b)$ such that
$F_\mu(u)$ is the tip of the slit. All points $[a,u]$ correspond to the left side, all points $[u,b]$ to the right side 
of $\gamma.$ (This orientation follows from the behaviour of $F_\mu(x)$ as  $x\to\pm\infty.$) 
Hence, there
exists a unique homeomorphism $h:[a,b]\to[a,b]$ with $h(u)=u,$ $h[a,u]=[u,b]$ such that 
$F_\mu(h(x))=F_\mu(x)$ for 
all $x\in[a,b].$\\
Furthermore, $F_\mu$ has exactly one zero $x_0\in\R\setminus[a,b]$ on $\R$, as the slit does not start at $0$. As $C=F_\mu(a)=F_\mu(b)$, we have $x_0 < a$ if and only if $C>0$.\\ 

 It follows from the Stieltjes-Perron inversion formula, see \cite[Theorems F.2, F.6]{MR2953553}, that $\supp \mu=\{x_0\}\cup [a,b]$ and that $\mu$ is absolutely continuous on
 $[a,b]$ and its density 
 $d(x)$ satisfies
 \[d(x) =  \lim_{\eps\to 0} -\frac{1}{\pi}\Im(1/F_\mu(x+i\eps))=-\frac{1}{\pi}\Im(1/F_\mu(x)).\] Hence,
 $d(h(x)) = d(x)$ for all $x\in[a,b]$, $d(x)>0$ on $(a,b)$, and $d(a)=d(b)=0$.\\
Let $\lambda=\mu(\{x_0\})$. Then we have
\[ \frac1{\pi} \Re\left(1/F_\mu(x)\right)
=\hat{\mathcal{H}}_\mu(x)=\hat{\mathcal{H}}_d(x)+\frac{\lambda}{\pi(x-x_0)} = \mathcal{H}_d(x)+\frac{\lambda}{\pi(x-x_0)}=\mathcal{H}_\mu(x)\]
for every $x\in\R\setminus\{x_0\}$ due to Lemma \ref{equal_Hilbert}. Here, $\mathcal{H}_d$ and $\hat{\mathcal{H}}_{d}$ are defined by replacing $\mu(dt)$ by $d(t)dt$ in the integration, and formally, we apply Lemma \ref{equal_Hilbert} to the probability measure defined by the density $d(t)/(1-\lambda)$.\\
Thus $\mathcal{H}_\mu(x)$ is continuous on $\R\setminus\{x_0\}$,
 $\mathcal{H}_\mu(a)=\mathcal{H}_\mu(b)=\frac1{\pi C}$, and $\mathcal{H}_\mu(h(x))=\mathcal{H}_\mu(x)$ on $[a,b]$.\\

``$\Longleftarrow$'' Assume that $\mu$ satisfies (a), (b), and (c). 
We define a curve $\gamma:[a,b]\to\overline{\Ha}$ by 
\[\gamma(x) =\frac{1}{\pi(\mathcal{H}_\mu(x)-id(x))}=\frac{1}{\pi(\hat{\mathcal{H}}_\mu(x)-i d(x))}.\]
Then $\gamma$ is continuous with $\gamma(a)=\gamma(b)=C$ and $\gamma(a,u]=\gamma[u,b)\subset \Ha$.\\

Denote by $D$ the domain $D=F_\mu(\Ha)$. The points of $\partial D$ which are accessible from $D$,
denoted by $\partial_a D$, correspond to the limits  
$\lim_{\eps\downarrow 0}F_\mu(x+i\eps)$, see \cite[Exercises 2.5, 5]{MR1217706}.
Hence $\partial_a D = \R\cup \gamma[a,b]$ and $\overline{\partial_a D}=\partial_a D$.
As $\partial_a D$ is dense in $\partial D$, see \cite[Theorem 3.23]{Wil63}, we obtain 
$\partial D = \R\cup \gamma[a,b]$.
Hence, $F_\mu$ has a continuous extension to $\overline{\Ha}$, see \cite[Theorem 2.1]{MR1217706}.\\

Clearly, $D$ is the unbounded component of the complement of $\gamma[a,b]=\gamma[a,u]$ in $\Ha$. \\
Let $p\in\gamma(a,u)$. Due to the symmetry $h$ we know that $F_\mu^{-1}(\{p\})$ consists of at least $2$ 
points. Hence, by \cite[Proposition 2.5]{MR1217706}, 
$\gamma[a,u]\setminus \{p\}$ is not connected, i.e. $p$ is a cut-point of the curve $\gamma[a,u]$. 
We conclude that $\gamma[a,u]$ is a simple curve, e.g. by \cite[Theorem 1]{Ayr29}. (In this reference, 
$M$ should be taken as $\gamma[a,u]\cup J$, 
where $J$ is a simple curve in $\{C,\gamma(u)\}\cup\Ha\setminus\gamma[a,u]$ connecting 
$C$ and $\gamma(u)$.)
Hence $D=\Ha\setminus \gamma[a,u]$.


\end{proof}
\begin{remark}\label{unio}
The proof shows that $x_0<a$ if $C>0$ and $x_0>b$ if $C<0$. \\
Furthermore, we note that there is a unique $u\in (a,b)$ with $d(u)=u$. This number is equal to the preimage of
the tip of  $\gamma$ under the map $F_\mu$.\\

Assume that only the density $d$ on $[a,b]$ is known. Then $\lambda:=\mu(\{x_0\})$ can simply be determined by $\lambda=1-\int_a^b d(x)\, dx$. 
Furthermore,  
$\mathcal{H}_{\mu}(x)=\mathcal{H}_d(x)+\frac{\lambda}{\pi(x-x_0)}$. As $\frac1{\pi C}=\mathcal{H}_{\mu}(a)=\mathcal{H}_{\mu}(b)$, we see that $x_0$ satisfies the quadratic equation
$\frac{\lambda(b-a)}{\pi(\mathcal{H}_d(a)-\mathcal{H}_d(b))}=(x_0-a)(x_0-b)$.
\end{remark}

The case of a slit starting at $0$ is quite similar.

\begin{theorem}\label{slit_measures2} Let $\mu$ be a probability measure on $\R$ such that $F_\mu$
is univalent. \\
Then $F_\mu$ maps $\Ha$ conformally onto $\Ha\setminus \gamma$, 
where $\gamma$ is a slit starting at $C=0$, if and only if the following conditions 
are satisfied:
\begin{itemize}
 \item[(a)] $\supp \mu= [a,b]$, where $\mu$ 
 has a continuous density $d(x)>0$ on $(a,b)$. 
 \item[(b)] $\mathcal{H}_\mu$ is defined and continuous on $\R\setminus\{a,b\}$ with 
  $\lim_{x\downarrow a}|\mathcal{H}_\mu(x)|=\lim_{x\uparrow b}|\mathcal{H}_\mu(x)|=\infty$ or 
 $\lim_{x\downarrow a}d(x)=\lim_{x\uparrow b}d(x)=\infty$.
 \item[(c)] There exists a decreasing homeomorphism $h:[a,b]\to[a,b]$ with 
 \[d(h(x))=d(x) \qquad \text{and} \qquad \mathcal{H}_\mu(h(x))=\mathcal{H}_\mu(x)\]
 for all $x\in (a,b)$.
 \end{itemize}
\end{theorem}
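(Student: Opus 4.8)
\textbf{Proof strategy for Theorem \ref{slit_measures2}.}
The plan is to follow the same two-directional argument used in the proof of Theorem \ref{slit_measures}, making the modifications forced by the fact that the slit now touches the real line at $0$, which is precisely the point where $G_\mu$ has a pole, i.e. where $F_\mu$ vanishes. First I would record the analytic dictionary: since $F_\mu$ is univalent with hydrodynamic normalization and maps onto $\Ha\setminus\gamma$, the boundary $\partial(\Ha\setminus\gamma)$ is locally connected, so $F_\mu$ extends continuously to $\overline\Ha$ by \cite[Theorem 2.1]{MR1217706}; and the Stieltjes--Perron inversion formula together with Lemma \ref{equal_Hilbert} gives $d(x)=-\tfrac1\pi\Im(1/F_\mu(x))$ and $\mathcal H_\mu(x)=\tfrac1\pi\Re(1/F_\mu(x))$ at every point where the density is continuous. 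The key structural point, replacing ``$F_\mu$ has a unique zero $x_0\notin[a,b]$'' from the previous proof, is that here $\gamma$ starts at $0$, so $F_\mu(x)=0$ for some boundary point; writing $\gamma(x)=1/(\pi(\mathcal H_\mu(x)-id(x)))$ as before, the slit reaching $0$ forces $\pi(\mathcal H_\mu(x)-id(x))\to\infty$ at the two endpoints of the parametrizing interval, which is exactly the dichotomy in condition (b). This also explains why there can be no atom: an atom would produce a simple pole of $G_\mu$ inside $\R$, hence a zero of $F_\mu$ in the interior, which is incompatible with the slit having a single endpoint on $\R$ at $0$; so $\supp\mu=[a,b]$ and $\mu$ is purely absolutely continuous there.

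For the direction ``$\Longrightarrow$'', I would argue: by continuity of the extension there is a compact interval $[a,b]$ with $F_\mu([a,b])=\gamma\cup\{0\}$ (the endpoints $a,b$ mapping to $0$), a unique $u\in(a,b)$ with $F_\mu(u)$ the tip, and a unique decreasing homeomorphism $h:[a,b]\to[a,b]$ fixing $u$, exchanging $[a,u]$ and $[u,b]$, with $F_\mu(h(x))=F_\mu(x)$ — the orientation again read off from the asymptotics of $F_\mu(x)$ as $x\to\pm\infty$. Stieltjes--Perron then yields (a), the identities $d(h(x))=d(x)$ and $\mathcal H_\mu(h(x))=\mathcal H_\mu(x)$ on $(a,b)$, positivity of $d$ on $(a,b)$, and continuity of $\mathcal H_\mu$ on $\R\setminus\{a,b\}$; finally, since $F_\mu(a)=F_\mu(b)=0$ and $F_\mu$ is continuous and nonzero on $\overline\Ha$ away from $\{a,b\}$, we get $|1/F_\mu(x)|\to\infty$ as $x\downarrow a$ and $x\uparrow b$, which is the statement that $|\mathcal H_\mu(x)|\to\infty$ or $d(x)\to\infty$ at each endpoint; noting that along the two sides of the slit the behaviour is symmetric under $h$, one checks the same alternative holds at both endpoints, giving (b).

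For ``$\Longleftarrow$'', assuming (a)--(c), I would define $\gamma:[a,b]\to\overline\Ha$ by $\gamma(x)=1/(\pi(\mathcal H_\mu(x)-id(x)))=1/(\pi(\hat{\mathcal H}_\mu(x)-id(x)))$, interpreting $\gamma(a)=\gamma(b)=0$ via condition (b) (either $d$ or $\mathcal H_\mu$ blowing up forces the reciprocal to $0$); then $\gamma$ is continuous, $\gamma(a,u]=\gamma[u,b)\subset\Ha$ since $d>0$ on $(a,b)$, and $\gamma(a)=\gamma(b)=0$. As in the previous proof, the accessible boundary $\partial_a D$ of $D:=F_\mu(\Ha)$ equals $\R\cup\gamma[a,b]$ by \cite[Exercises 2.5, 5]{MR1217706}, it is closed, hence dense in $\partial D$ forces $\partial D=\R\cup\gamma[a,b]$, giving the continuous extension to $\overline\Ha$; then $D$ is the unbounded component of $\Ha\setminus\gamma[a,u]$, the symmetry $h$ shows every interior point of $\gamma[a,u]$ is a cut-point so $\gamma[a,u]$ is a simple curve by \cite[Theorem 1]{Ayr29} (again applied to $\gamma[a,u]$ together with an auxiliary arc joining $0$ to $\gamma(u)$ through $\Ha$), and therefore $D=\Ha\setminus\gamma[a,u]$, i.e. $F_\mu$ maps $\Ha$ conformally onto the complement of a slit starting at $0$. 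The main obstacle I anticipate is the careful handling of the endpoints $a,b$: one must show that the formula for $\gamma$ extends continuously with value $0$ there under \emph{either} horn of the dichotomy in (b), and conversely that a slit genuinely reaching $0$ forces exactly this dichotomy and forbids an atom — this is where the argument genuinely differs from Theorem \ref{slit_measures} and where the two cases in (b) must be disentangled.
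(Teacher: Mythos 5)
Your overall strategy is the paper's: reduce to the argument of Theorem \ref{slit_measures}, observe that the zeros of $F_\mu$ now sit at the endpoints $a,b$ (so there is no atom and $\supp\mu=[a,b]$), derive the blow-up dichotomy in (b) from $|1/\gamma(x)|=|\pi(\hat{\mathcal H}_\mu(x)-i\,d(x))|\to\infty$ at the endpoints, and run the same accessible-boundary/cut-point argument for the converse. That skeleton is fine.

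There is, however, one genuine gap. You invoke Lemma \ref{equal_Hilbert} to identify $\mathcal H_\mu(x)$ with $\hat{\mathcal H}_\mu(x)=\tfrac1\pi\Re(1/F_\mu(x))$ ``at every point where the density is continuous,'' but the lemma's hypothesis is that the \emph{measure} has a compactly supported, (globally) continuous --- hence bounded --- density: its proof uses the uniform bound $|f(x+\eps u)-f(x)|\le 2\max|f|$ to apply dominated convergence. In the present theorem the density $d$ is allowed (indeed, in one horn of the dichotomy in (b), forced) to blow up at $a$ and $b$, so $\mu$ does not satisfy the hypothesis and the lemma cannot be applied to $\mu$ directly. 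Since condition (b)--(c) is phrased in terms of the principal-value transform $\mathcal H_\mu$ rather than the boundary values $\hat{\mathcal H}_\mu$, this identification is not cosmetic; it is the one step that genuinely differs from Theorem \ref{slit_measures}. The fix (which is what the paper does) is a localization: for any interval $I$ with $\overline I\subset(a,b)$, write $\mu=\nu_1+\nu_2$ with $\nu_1(\overline I)=0$ and $\nu_2$ supported in a compact subinterval of $(a,b)$ with continuous density; then $\mathcal H_{\nu_1}=\hat{\mathcal H}_{\nu_1}$ on $I$ because the integral is not singular there, and Lemma \ref{equal_Hilbert} applies to the normalized $\nu_2$. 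Adding a sentence to this effect would close the gap. (A smaller point: $|\pi(\hat{\mathcal H}_\mu-i\,d)|\to\infty$ only yields that the \emph{modulus} blows up; your symmetry remark about $h$ handles the ``same alternative at both endpoints'' issue, but be aware that passing from blow-up of the modulus to blow-up of one of the two real quantities individually is exactly as loose in your write-up as it is in the paper.)
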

\begin{proof}
``$\Longrightarrow$'':\\
We can argue as in the proof of $C\not=0$.
In this case, $F_\mu$ has the zeros $a,b$ and no zero in $\R\setminus[a,b]$.\\ 

 The Stieltjes-Perron inversion formula implies that $\supp \mu=[a,b]$ and that $\mu$ is 
 absolutely continuous on
 $(a,b)$ and the density  $d(x)$ as well as $\hat{\mathcal{H}}_\mu(x)$ are continuous on $(a,b)$ with 
 $d(h(x)) = d(x)$ and $\hat{\mathcal{H}}_\mu(h(x))=\hat{\mathcal{H}}_\mu(x)$ for all $x\in(a,b)$. 
 As the curve starts at $0$, its image under $z\mapsto -1/z$ 
 is a simple curve from some point in $\Ha$ to $\infty$ on the Riemann sphere. Hence 
 $|1/\gamma(x)|=|\pi(\hat{\mathcal{H}}_\mu(x)-i d(x))|\to \infty$ 
 as $x\downarrow a$ and  as $x\uparrow b$. Consequently, $d(x)\to\infty$ or $|\hat{\mathcal{H}}_\mu(x)|\to\infty$ as 
$x\downarrow a$ and as $x\uparrow b$.\\
 
It remains to show that $\hat{\mathcal{H}}_{\mu}$ and 
$\mathcal{H}_{\mu}$ coincide on $(a,b)$. Let $I$ be an open interval such that its closure is contained in 
$(a,b)$. We decompose the measure $\mu$ into two non-negative measures $\mu=\nu_1+\nu_2$, 
where $\nu_1(\overline{I})=0$, $\nu_2((-\infty,a+\eps)\cup(b-\eps,\infty))=0$ for some $\eps>0$. 
Furthermore, we require that $\nu_2$ has a continuous density.\\

We define $\hat{\mathcal{H}}_{\nu_j}, \mathcal{H}_{\nu_j}$ by integrating with respect to $\nu_j(dt)$. 
As $\nu_1(\overline{I})=0$, $\hat{\mathcal{H}}_{\nu_1}$ is continuous (in fact analytic) on $I$. 
Also $\mathcal{H}_{\nu_1}$ is defined on $I$ and it is easy to see that 
$\hat{\mathcal{H}}_{\nu_1}(x)=\mathcal{H}_{\nu_1}(x)$ on $I$.
We know that $\hat{\mathcal{H}}_\mu=\hat{\mathcal{H}}_{\nu_1}+\hat{\mathcal{H}}_{\nu_2}$ is continuous on $(a,b)$ and 
we conclude that $\hat{\mathcal{H}}_{\nu_2}$ exists and is continuous on $I$.\\
We now apply 
Lemma \ref{equal_Hilbert} to $\nu_2/\nu_2(\R)$ and obtain that 
$\mathcal{H}_{\nu_2}(x)$ exists and is equal to $\hat{\mathcal{H}}_{\nu_2}(x)$ on $I$. 
Thus $\mathcal{H}_\mu(x)=\mathcal{H}_{\nu_1}(x)+\mathcal{H}_{\nu_2}(x)=\hat{\mathcal{H}}_\mu(x)$ on $I$. As the 
interval $\overline{I}\subset (a,b)$ was chosen arbitrarily, this is true for 
the whole interval $(a,b)$.\\

``$\Longleftarrow$'':\\
Assume that $\mu$ is a probability measure on $\R$ satisfying (a), (b), (c). 
We define a curve $\gamma:(a,b)\to \Ha$ by 
$\gamma(x) =\frac{1}{\pi(\mathcal{H}_\mu(x)-i d(x))}=\frac{1}{\pi(\hat{\mathcal{H}}_\mu(x)-i d(x))}$.\\
Then $\gamma$ is continuous with $\gamma(a,u]=\gamma[u,b)\subset \Ha$ and 
$\lim_{x\downarrow a} \gamma(x) = \lim_{x\uparrow b}\gamma(x) = 0$.\\

The rest of the proof is analogous to the case $C\not=0$.
\end{proof}

\begin{remark}\label{surj}
 Assume that $\mu$ is a probability measure such that $F_\mu(\Ha)=\Ha\setminus \gamma$ 
 for a simple curve $\gamma$. Such an $F_\mu$ does not need to be injective:\\
  Let $G:\Ha \to \Ha\setminus \gamma$ be the unique conformal mapping 
 with $G(z)=z+\mathcal{O}(1/|z|)$ as $z\to\infty$ and let $H$ be the $F$-transform of 
$\frac1{2}\delta_{-1}+\frac1{2}\delta_1$. Then
\[H(z)=\frac1{\frac{1/2}{z+1}+\frac{1/2}{z-1}}=\frac{z^2-1}{z}=z-\frac1{z}\] and 
$H$ is surjective (rational function of degree $2$ mapping $\R\cup{\{\infty\}}$ onto itself) but not injective ($H(i/2\pm\sqrt{3}/2)=i$). Consequently, $G\circ H$ is a non-injective $F$-transform with $(G\circ H)(\Ha)=G(\Ha)=\Ha\setminus\gamma$.
\end{remark}

\begin{remark}\label{rm:1}
Note that $F_\mu(\Ha)=F_\mu(\Ha-d)=F_{\mu'}(\Ha)$ whenever $\mu'$ is $\mu$ translated by $d\in\R$. \\
Conversely, if we have two univalent $F$-transforms with 
$F_\mu(\Ha)=F_{\mu'}(\Ha)=\Ha\setminus \gamma$, then $\alpha=F_\mu\circ F_{\mu'}^{-1}$ is an automorphism 
of $\Ha$ with $\alpha(\infty)=\infty$ and $\alpha'(\infty)=1$, which implies $\alpha(z)=z+d$ for some 
$d\in\R$. Hence $\mu'$ is a translation of $\mu$.
\end{remark}

\begin{remark}If we know that 
$F_\mu$ is a univalent slit mapping, then, by the previous remark, the variance $\sigma^2$ of $\mu$ only
depends on the slit $\gamma$. If we translate the measure such that $F_\mu$ has hydrodynamic normalization, then the first moment 
of $\mu$ is equal to $0$ and we can see that the half-plane capacity $c$ of the slit is in fact equal to $\sigma^2$, see 
\cite[Proposition 2.2]{M92}.\\
The half-plane capacity has a more or less geometric interpretation, see
\cite{LLN09}. An explicit probabilistic formula is given in \cite[Proposition 3.41]{lawler05}.
\end{remark}

\begin{remark}The homeomorphism $h$ is also called the \emph{welding homeomorphism} of 
the slit $\gamma$.\\
A slit $\gamma$ is called \emph{quasislit} if $\gamma$ approaches $\R$ nontangentially and 
$\gamma$ is the image of a line segment under a quasiconformal mapping.
The theory of conformal welding implies: 
$\gamma$ is a quasislit if and only if $h$ is quasisymmetric;
see \cite[Lemma 6]{Lind:2005} and \cite[Lemma 2.2]{MarshallRohde:2005}.\\
In this case, the slit is uniquely determined by $h$ and its starting point $C$. An example of a slit which is not uniquely 
determined by $h$ and $C$ is a slit with positive area. \\
We refer to \cite{Bis07} for further results concerning conformal welding.
\end{remark}

\begin{example}
 Take a simple curve $\gamma:[0,1)\to\overline{\Ha}$ such that $\gamma(0)=0$, $\gamma(0,1)\subset \Ha\setminus[i,2i]$, 
 and the limit points of $\gamma$ as $t\to 1$ form the interval $[i,2i]$, as depicted in the figure 
 below. Let $D=\Ha\setminus (\gamma(0,1) \cup [i,2i])$. Then $D$ is simply connected. Let $F_\mu:\Ha\to D$ 
 be univalent. Then the limit $\lim_{\eps\downarrow 0}F_\mu(x+i\eps)$ exists for every 
 $x\in \R$ due to \cite[Exercises 2.5, 5]{MR1217706} and the fact that the prime end $p$ that corresponds to 
 $[i,2i]$ is accessible, i.e. the point $2i$ can be reached by a Jordan curve in $D$.
 In this case, $\mu$ has quite similar properties as in Theorem \ref{slit_measures2}, but the density $d$ is not continuous. The midpoint $u$ corresponds to the preimage of $p$ under $F_\mu$.\\
 If we replace the vertical interval $[i,2i]$ by a horizontal interval like $[i,1+i]$, a similar construction 
 yields a measure $\mu$ satisfying all properties as in Theorem \ref{slit_measures2} except that 
 $\mathcal{H}_\mu$ is not continuous. 
\end{example}

 \begin{figure}[h]
 \begin{center}
 \includegraphics[width=0.5\textwidth]{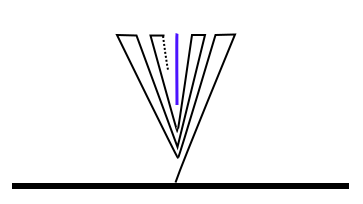}
 \caption{A curve $\gamma$ approaching a vertical line segment (blue).}
 \end{center}
 \end{figure}

\begin{example}
Consider the simply connected domain $D=\Ha\setminus \overline{\D}$. Let $F_\mu:\Ha\to D$ 
be univalent. The density $d$ of $\mu$ is symmetric 
with respect to the homeomorphism $h(x)=-x$, but $\mathcal{H}_\mu(h(x)) = -\mathcal{H}_\mu(x)$.
\end{example}

\subsection{Cauchy transforms vs Fourier transforms}\label{qu}

The Fourier transform of a probability measure $\mu$ is given as 
$\F_\mu(x)= \int_\R e^{ixt} \mu(dt)$, $x\in\R$. Classical independence of random variables 
leads to the classical convolution 
 $\mu\ast\nu$ defined by \[\F_{\mu\ast\nu}=\F_\mu \cdot \F_\nu.\]

 \begin{definition}
A stochastic process $(X_t)_{t\geq 0}$ is called an \emph{additive process} if the following three conditions are satisfied.
\begin{enumerate}
 \item[(1)] The increments $X_{t_0}, X_{t_1}-X_{t_0}, ..., X_{t_n}-X_{t_{n-1}}$ are independent 
 for any choice of $n\geq 1$ and $0\leq t_0 < t_1 <... < t_n$.
 \item[(2)] $X_0=0$ almost surely.
 \item[(3)] For any $\eps>0$ and $s\geq 0$, $\mathbb{P}[|X_{s+t}-X_s|>\eps]\to 0$ as $t\to 0$.
\end{enumerate}
Such a process is called a \emph{L\'evy process} if, in addition, 
 \begin{enumerate}
 \item[(4)] the distribution of $X_{t+s}-X_s$ does not depend on $s$.
\end{enumerate}
 \end{definition}
 
 \begin{definition}
A probability measure $\mu$ on $\R$ is said to be
\emph{$\ast$-infinitely divisible} if for every $n\in\N$ there exists
$\mu_n$ such that $\mu=\mu_n \ast ...\ast \mu_n$ ($n$-fold convolution). 
The set of all infinitely divisible distributions is denoted by
$\ID(\ast)$.
\end{definition}

The following result characterizes all distributions appearing in additive processes, see 
\cite[Theorems 1.1-1.3]{BNMR01}.

\begin{theorem}\label{inf_div}
 Let $\mu$ be a probability measure on $\R$. The following statements are equivalent:
 \begin{enumerate}
 \item[(a)] There exists an additive process $(X_t)_{t\geq0}$ 
 such that $\mu$ is the distribution of $X_1$.
 \item[(b)] There exists a L\'evy process $(X_t)_{t\geq0}$ 
 such that $\mu$ is the distribution of $X_1$.
 \item[(c)] $\mu\in \ID(\ast)$.
 \item[(d)] (L\'evy-Khintchine representation) 
 There exist $a\in \R$, $\sigma \geq 0$, and a non-negative measure $\nu$ with $\nu(\{0\})=0$ and 
 $\int_\R (1\wedge t^2) \nu(dt)<\infty$ such that
 \begin{equation} \label{khin}
\F_\mu(x) =\exp\left(iax - \frac1{2}\sigma^2x^2 + \int_{\R} \left(e^{ix t}-1- ixt 1_{\{|x|<1\}}
\right)\nu(dt)\right), \qquad x\in \R.
\end{equation}
\end{enumerate}
\end{theorem}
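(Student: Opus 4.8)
The plan is to prove Theorem~\ref{inf_div} by establishing the cycle of implications $(b)\Rightarrow(a)\Rightarrow(c)\Rightarrow(d)\Rightarrow(b)$, which is the standard route; the substantive analytic content sits in $(c)\Leftrightarrow(d)$ and in constructing a L\'evy process from \eqref{khin}. First I would observe that $(b)\Rightarrow(a)$ is immediate: a L\'evy process satisfies conditions (1)--(3) by definition, so it is in particular an additive process. For $(a)\Rightarrow(c)$, fix an additive process $(X_t)$ with $X_1\sim\mu$, and for $n\in\N$ write $X_1 = \sum_{k=1}^{n}\big(X_{k/n}-X_{(k-1)/n}\big)$. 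By condition~(1) the summands are independent, so $\mu = \mu_{1,n}\ast\cdots\ast\mu_{n,n}$ where $\mu_{k,n}$ is the law of $X_{k/n}-X_{(k-1)/n}$. This is not quite the definition of $\ID(\ast)$, which demands the factors be \emph{equal}; so here I would invoke the classical fact that the class of limits (in distribution) of such ``null triangular arrays'' coincides with $\ID(\ast)$ --- equivalently, cite that additive processes have infinitely divisible marginals (this is exactly the content of \cite[Theorems 1.1--1.3]{BNMR01}, which the statement already points to). Alternatively one shows directly that each marginal $\mu_t$ of an additive process lies in $\ID(\ast)$ by a stochastic-continuity argument and then $\mu=\mu_1$.

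Next, $(c)\Rightarrow(d)$ is the L\'evy--Khintchine representation theorem: if $\mu\in\ID(\ast)$ then $\F_\mu$ has no zeros (a standard argument: $|\F_{\mu_n}|^2$ is a characteristic function converging to $1$ near $0$, forcing $\F_\mu\neq0$), so a continuous logarithm $\psi=\log\F_\mu$ exists with $\psi(0)=0$, and $\psi/n$ is the log-characteristic function of $\mu_n$. Writing $\psi_n$ for the latter, one has $n(\F_{\mu_n}-1)\to\psi$ locally uniformly, and the measures $n\,\mu_n$ restricted away from $0$, together with the truncated second moments, are tight; extracting a vague limit gives the L\'evy measure $\nu$, the Gaussian part $\sigma^2$, and the drift $a$, yielding \eqref{khin}. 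I would present this compactly and refer to a standard text (Sato, \emph{L\'evy Processes and Infinitely Divisible Distributions}) rather than reproducing the tightness estimates.

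Finally, $(d)\Rightarrow(b)$: given the triple $(a,\sigma^2,\nu)$, define for each $t\ge0$ the function $\F_{\mu_t}(x)=\exp\big(t(\text{exponent in }\eqref{khin})\big)$. One checks this is the characteristic function of a probability measure $\mu_t$ (it is the $t$-th convolution power of $\mu$ in the infinitely-divisible sense; positive-definiteness follows since the exponent is a continuous negative-definite function, e.g.\ by Schoenberg's theorem, or simply because $\mu_t$ is the vague limit of compound-Poisson-plus-Gaussian laws). The family $(\mu_t)$ satisfies $\mu_s\ast\mu_t=\mu_{s+t}$, $\mu_0=\delta_0$, and $\mu_t\to\delta_0$ weakly as $t\downarrow0$; by the Kolmogorov extension theorem these are the finite-dimensional distributions (with independent stationary increments) of a process $(X_t)_{t\ge0}$, which is then a L\'evy process with $X_1\sim\mu$. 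The main obstacle is $(c)\Rightarrow(d)$: proving non-vanishing of $\F_\mu$ and carrying out the tightness/vague-convergence argument that produces $\nu$ is the one genuinely technical step, and I expect to handle it by citing the classical L\'evy--Khintchine theorem (as \cite{BNMR01} itself does) rather than by a self-contained proof.
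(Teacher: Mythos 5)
Your outline is correct: the paper itself gives no proof of Theorem~\ref{inf_div}, deferring entirely to \cite[Theorems 1.1--1.3]{BNMR01}, and your cycle $(b)\Rightarrow(a)\Rightarrow(c)\Rightarrow(d)\Rightarrow(b)$ is the standard argument found there and in Sato's book, with the genuinely technical steps (the null-array criterion behind $(a)\Rightarrow(c)$ and the tightness/vague-limit construction of $\nu$ in $(c)\Rightarrow(d)$) correctly identified and legitimately delegated to the classical literature. In particular you rightly note that the $n$ increments of $X_1$ need not be identically distributed, so $(a)\Rightarrow(c)$ requires the infinitesimality of the array (from stochastic continuity) rather than the bare definition of $\ID(\ast)$.
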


\begin{remark}
For $\mu\in\ID(\ast)$, we denote by $L(\mu)=(a,\sigma, \nu)$ the L\'evy triple of $\mu$. If we shift $\mu$ by a constant 
$c\in\R$, then we obtain $L(\mu(\cdot-c))=(a+c,\sigma,\nu)$.
\end{remark}

The $F$-transform plays the role of the Fourier transform in \emph{monotone probability theory}. The monotone 
convolution $\mu\rhd \nu$ is defined by \[F_{\mu\rhd\nu}=F_\mu\circ F_\nu.\]

The monotone analogue of property (a) in Theorem \ref{inf_div} is the property of $F_\mu$ being a univalent
function. 

\begin{theorem}[Theorem 1.16 in \cite{FHS}] Let $\mu$ be a probability measure on $\R$. The following statements are equivalent:
\begin{itemize}
	\item[(a)] $F_\mu$ is univalent.
	\item[(b)] There exists a quantum process $(X_t)_{t\geq0}$ with monotonically independent increments such that 
	$\mu$ is the distribution of $X_1$.
\end{itemize}
\end{theorem}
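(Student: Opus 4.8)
The plan is to run the standard dictionary of monotone probability theory, under which the monotone convolution $\rhd$ corresponds to composition of $F$-transforms, and to reduce the equivalence to an embedding statement for univalent self-maps of $\Ha$, much in the spirit of Section~\ref{Sec_emb}. The key intermediate object is a \emph{monotone convolution hemigroup}: a weakly continuous family $(\mu_{s,t})_{0\le s\le t}$ of probability measures on $\R$ with $\mu_{t,t}=\delta_0$ and $\mu_{s,u}=\mu_{s,t}\rhd\mu_{t,u}$ for $s\le t\le u$. Given a quantum process $(X_t)_{t\ge0}$ with monotonically independent increments, $X_0=0$, and stochastic continuity, letting $\mu_{s,t}$ be the distribution of $X_t-X_s$ yields precisely such a hemigroup, with $\mu=\mu_{0,1}$. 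Passing to $F$-transforms and writing $F_{s,t}=F_{\mu_{s,t}}$, the relation $F_{\mu\rhd\nu}=F_\mu\circ F_\nu$ turns this into an \emph{evolution family} in $\Ha$: $F_{t,t}=\mathrm{id}$, $F_{s,u}=F_{s,t}\circ F_{t,u}$, $(s,t)\mapsto F_{s,t}$ continuous locally uniformly on $\Ha$, and $F_{s,t}(iy)/(iy)\to1$ as $y\to\infty$ for every $s\le t$; here $F_\mu=F_{0,1}$.

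For (b)$\Rightarrow$(a) I would then appeal to the half-plane version of classical Loewner theory: an evolution family of this kind is the solution of a non-autonomous Loewner ODE in $\Ha$ driven by a Herglotz vector field, and since solutions of a Loewner ODE are injective in the space variable (trajectories do not cross), every $F_{s,t}$ — in particular $F_\mu=F_{0,1}$ — is univalent. The only point needing care is that a priori one has continuity, not absolute continuity, in $t$; for increment processes this regularity is known (a monotone L\'evy--Khintchine-type representation), so I would cite it rather than reprove it. Note that the naive ``composition of maps close to the identity'' argument does \emph{not} work, since $F_\nu$ need not be univalent for $\nu$ close to $\delta_0$ (e.g.\ $F$-transform of $\tfrac12\delta_{-\eps}+\tfrac12\delta_\eps$, which has a critical point at $i\eps$).

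For the converse (a)$\Rightarrow$(b), which carries the actual content, the idea is to embed $F_\mu$ into such an evolution family — a chordal, boundary-normalized analogue of Pommerenke's theorem (Theorem~\ref{thm:e}). Put $D=F_\mu(\Ha)$. If $D=\Ha$ then $F_\mu(z)=z+c$, $\mu=\delta_c$, and the deterministic process $X_t=ct$ does the job; so assume $D\subsetneq\Ha$. I would construct a continuously decreasing family of simply connected domains $\Ha=D_0\supseteq D_t\supseteq D_1=D$ — growing the omitted set $\Ha\setminus D$ in time and reparametrizing so that $D$ is attained exactly at $t=1$ — and let $\Phi_t\colon\Ha\to D_t$ be the conformal map normalized by $\Phi_t(iy)/(iy)\to1$, so that $\Phi_0=\mathrm{id}$. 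By the uniqueness argument of Remark~\ref{rm:1}, $\Phi_1$ equals $F_{\mu'}$ for some translate $\mu'$ of $\mu$, so one first builds the process for $\mu'$ and then absorbs the translation into a deterministic drift. Setting $F_{s,t}=\Phi_s^{-1}\circ\Phi_t$ produces an evolution family; since $F_{s,t}(iy)/(iy)\to1$, each $F_{s,t}$ is the $F$-transform of a probability measure $\mu_{s,t}$, and $(\mu_{s,t})$ is a weakly continuous monotone convolution hemigroup with $\mu_{0,1}=\mu'$. Finally, the reconstruction theorem for monotone increment processes (realized, e.g., via the monotone Fock space) converts any such hemigroup into a quantum process with monotonically independent increments, giving $(X_t)$ with $X_1\sim\mu$ after the drift correction.

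The step I expect to be the main obstacle is the analytic construction in (a)$\Rightarrow$(b): producing the decreasing family $(D_t)$ so that the conformal maps $\Phi_t$ keep the normalization at $\infty$ for \emph{every} $t$ and reach $D$ at time precisely $1$. This is exactly the flavour of embedding problem studied in Section~\ref{Sec_emb}, here complicated by the fact that $F_\mu$ need not extend continuously to $\partial\Ha$ and $\mu$ need not have finite moments, so $\Ha\setminus D$ may be unbounded and the usual half-plane-capacity parametrization is unavailable; a robust version of the chordal Loewner existence theorem is needed. A second, more conceptual difficulty lies in going back from the hemigroup of measures to an honest quantum process — realizing the $X_t$ as operators on a common quantum probability space with genuinely monotonically independent increments — which is the step where the non-commutative input is indispensable.
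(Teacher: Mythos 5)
The paper does not actually prove this statement: it is quoted verbatim from \cite{FHS}, and the proof lives there. Your outline does reproduce the architecture of that proof --- increment distributions forming a monotone convolution hemigroup, the translation into an evolution family of $F$-transforms via $F_{\mu\rhd\nu}=F_\mu\circ F_\nu$, chordal Loewner theory for one implication, and a Pommerenke-type embedding plus a Fock-space realization for the other --- and your warning that ``small'' $F$-transforms need not be univalent (the critical point of $z-\eps^2/z$ at $i\eps$) is correct and explains why no naive composition argument can work. As a proof, however, both directions rest on steps that you defer to results which do not exist in the form you need.

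In (b)$\Rightarrow$(a), the assertion that the required time-regularity ``is known'' for increment processes is the gap: stochastic continuity yields only weak continuity of $t\mapsto\mu_{s,t}$, not local absolute continuity, and the monotone L\'evy--Khintchine representation you invoke concerns $\rhd$-infinitely divisible laws and stationary $\rhd$-semigroups, not general hemigroups. A deterministic process $X_t=f(t)$ with $f$ continuous, increasing and singular gives a perfectly good hemigroup $\mu_{s,t}=\delta_{f(t)-f(s)}$ with no time-derivative anywhere, so the reduction to a Loewner ODE driven by a Herglotz vector field is simply not available without an additional argument (in \cite{FHS} univalence of the hemigroup elements is one of the substantive technical points, handled by a reparametrization/approximation rather than by quoting a regularity theorem). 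In (a)$\Rightarrow$(b) you correctly isolate the boundary-normalized, chordal analogue of Pommerenke's Theorem~\ref{thm:e} as the obstacle, but that embedding theorem \emph{is} the content of the implication --- $\Ha\setminus F_\mu(\Ha)$ may be unbounded, $F_\mu$ need not extend to $\R$, and the half-plane-capacity parametrization fails --- and you do not supply it. So the proposal is a faithful road map of the argument in \cite{FHS}, but the two hard analytic steps are left as black boxes, and the justification offered for the first one is incorrect as stated.
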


For the precise meaning of the quantum process mentioned in (b), we refer the reader to \cite{FHS}.

\begin{remark}
 Let $\mu_t$ be the distribution of $X_t$ and let $f_t=F_{\mu_t}$. In \cite[Proposition 3.11]{FHS} it is shown 
 that  $(f_t)$ is a decreasing Loewner chain, i.e. every $f_t$ is univalent and 
 $f_t(\Ha)\subset f_s(\Ha)$ 
 whenever $s\leq t$. In case $t\mapsto f_t$ is differentiable, it satisfies a Loewner equation 
 of the form \begin{equation*}
\frac{\partial}{\partial t} f_{t}(z) = \frac{\partial}{\partial z}f_{t}(z)\cdot M(z,t), 
\end{equation*}
where $M(z,t)=\gamma_t + \int_\mathbb{R}\frac{1+xz}{x-z} \rho_t({\rm d}x)$ 
for some $a_t\in\mathbb{R}$ and a finite non-negative measure $\rho_t$ on $\mathbb{R}$. \\

Let us compare this to classical additive processes:

 Let $(\mu_{t})_{t\geq0}$ be the distributions of a L\'evy process and let $\F_t=\F_{\mu_t}$. 
Then $(\F_t)_{t\geq0}$ is a multiplicative semigroup with $\F_1(x)=\F_\mu(x)=e^{\varphi(x)}$, 
i.e. $\F_t=e^{t\varphi(x)}$ or
\begin{equation*}
\frac{d}{dt}\F_t(x) = \varphi(x)\cdot \F_t, \qquad \F_0(x) \equiv 1.
\end{equation*}

The non-autonomous case of this equation is given by
\begin{equation*} 
\frac{d}{dt}\F_t(x) = \varphi_t(x)\cdot \F_t, \qquad \F_0(x) \equiv 1,
\end{equation*}
where $\exp(\varphi_t(x))=\F_{\nu_t}$ with $\nu_t\in\ID(\ast)$ for almost every $t\geq0$.
This equation corresponds to additive processes, provided that $t\mapsto\F_{\mu_t}(x)$ is indeed 
differentiable almost everywhere.
\end{remark}

By replacing the $F$-transform with the classical Fourier transform, we can ask some questions from 
Section 2 now for the Fourier transform.\\

Consider an additive process $(X_t)$ with distributions $(\mu_t)$. Let $(a_t, \sigma_t,\nu_t)=L(\mu_t)$. 
Then we can normalize the process by $Y_t=X_t - a_t$. Also $(Y_t)$ is an additive process and the distributions 
$(\alpha_t)$ of $(Y_t)$ satisfy $L(\alpha_t)=(0,\sigma_t,\nu_t)$.
 Let $\mu\in\ID(\ast)$. Let $(X_t)$ and $(Y_t)$ be two normalized additive processes such that 
 $\mu$ is the distribution of $X_1$ and of $Y_1$. \\
 We say that $\mu$ has a unique embedding if the distributions of $(Y_t)$ are obtained 
 by a time change of the distributions of $(X_t)$. 

\begin{theorem}Let $\mu\in \ID(\ast)$  with L\'evy triple $L(\mu)=(0,\sigma,\nu)$.
\begin{itemize}
 \item[(a)] $\mu$ can be embedded into an additive process $(X_t)$ with distributions $(\mu_t)$
 such that $t\mapsto\F_{\mu_t}$ is differentiable everywhere.
 \item[(b)]  
  $\mu$ has a unique embedding if and 
 only if $\nu=0$ or $\sigma=0$ and $\nu=\lambda \delta_{x_0}$ for some $x_0\in\R\setminus\{0\}$.
\end{itemize}
\end{theorem}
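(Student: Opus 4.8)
The plan is to reduce everything to the classical description of additive processes via their local characteristics together with the exact additivity of L\'evy triples under $\ast$-convolution. For (a) I would take the L\'evy process attached to $\mu$: writing $\F_\mu=e^{\varphi}$ with $\varphi$ the exponent appearing in \eqref{khin} with $a=0$, define $\mu_t$ by $\F_{\mu_t}(x)=e^{t\varphi(x)}$. Then $(X_t)$ is a L\'evy, hence additive, process with $L(\mu_t)=(0,\sigma\sqrt t,t\nu)$, so it is already normalized, $\mu_1=\mu$, and $t\mapsto\F_{\mu_t}(x)=e^{t\varphi(x)}$ is real-analytic in $t$ for every fixed $x$, with derivative $\varphi(x)\F_{\mu_t}(x)$. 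This proves (a).

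For (b) the key preliminary fact is that for any normalized additive process $(Y_t)$ with distributions $(\alpha_t)$ one has $\alpha_t\in\ID(\ast)$ (a distribution of an additive process is a finite convolution of uniformly negligible independent summands, hence infinitely divisible). Writing $L(\alpha_t)=(0,\sigma_t,\nu_t)$, the identity $\alpha_t=\alpha_s\ast\rho_{s,t}$ for $s\le t$, combined with the fact that Gaussian variances and L\'evy measures add under $\ast$-convolution, shows that $t\mapsto\sigma_t^2$ is non-decreasing and $t\mapsto\nu_t$ is non-decreasing as measures, both continuous by stochastic continuity, with $\sigma_0=0$, $\nu_0=0$, $\sigma_1=\sigma$, $\nu_1=\nu$; conversely any such pair of monotone continuous paths arises from a normalized additive process. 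Hence the ``embedding data'' of $\mu$ on $[0,1]$ is exactly such a pair of paths ending at $(\sigma^2,\nu)$, and a \emph{time change} amounts to a common non-decreasing continuous reparametrization $\tau$ of $[0,1]$ with $\tau(0)=0$, $\tau(1)=1$.

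For the ``if'' direction: if $\nu=0$, then for $t\in[0,1]$ monotonicity gives $0\le\nu_t\le\nu_1=0$, so $\alpha_t$ is the centered Gaussian of variance $\sigma_t^2$, and $\tau(t)=\sigma_t^2/\sigma^2$ (with $\tau\equiv0$ if $\sigma=0$) realizes $(\alpha_t)$ as the time change $\mu_{\tau(t)}$ of the canonical process. If $\sigma=0$ and $\nu=\lambda\delta_{x_0}$, then for $t\in[0,1]$ we get $\sigma_t=0$ and $0\le\nu_t\le\lambda\delta_{x_0}$, forcing $\nu_t=\lambda_t\delta_{x_0}$ with $\lambda_t$ non-decreasing, continuous, $\lambda_0=0$, $\lambda_1=\lambda$; then $\tau(t)=\lambda_t/\lambda$ works. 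For the ``only if'' direction, suppose $\nu\ne0$ and that it is not the case that $\sigma=0$ and $\nu=\lambda\delta_{x_0}$. If $\sigma>0$, let $\beta_t$ be the infinitely divisible distribution with $L(\beta_t)=\big(0,\sigma\sqrt{\min(2t,1)},\max(2t-1,0)\,\nu\big)$; this is a normalized additive process with $\beta_1=\mu$, but if it were a time change $\mu_{\tau(t)}$ of the canonical process then matching Gaussian parts would force $\tau(t)=\min(2t,1)$ while matching L\'evy measures (using $\nu\ne0$) would force $\tau(t)=\max(2t-1,0)$, which already fails at $t=1/2$. If instead $\sigma=0$ but $\nu$ is not a single atom, pick a Borel partition $\R\setminus\{0\}=A\sqcup B$ with $\nu(A)>0$ and $\nu(B)>0$, and let $\beta_t$ be the infinitely divisible distribution with $L(\beta_t)=\big(0,0,\min(2t,1)\,\nu|_{A}+\max(2t-1,0)\,\nu|_{B}\big)$; restricting the hypothetical identity $\nu_t=\tau(t)\nu$ to $A$ and to $B$ again yields $\min(2t,1)=\tau(t)=\max(2t-1,0)$, a contradiction. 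In both cases $\mu$ admits two embeddings not related by a time change.

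I expect the main difficulty to be not conceptual but a matter of setting things up cleanly: pinning down the notion of time change (the statement is natural only on the interpolating interval $[0,1]$, since past time $1$ two normalized additive processes through $\mu$ need not be related at all), and invoking carefully the characterization of additive processes by their systems of generating triplets together with the exact additivity of the Gaussian and jump parts — this is what guarantees simultaneously that the constructed families $(\beta_t)$ are bona fide normalized additive processes and that the embedding data of an arbitrary one is monotone and continuous. A minor point is to note, in the case $\sigma=0$ with $\nu$ not an atom, that a nonzero measure which is not concentrated at one point indeed admits a Borel splitting $A\sqcup B$ with both parts of positive $\nu$-mass.
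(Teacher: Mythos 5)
Your proposal is correct and follows essentially the same route as the paper: part (a) via the canonical L\'evy process, and part (b) by exploiting additivity of L\'evy triples under $\ast$ for the ``if'' direction and, for the ``only if'' direction, constructing a two-stage embedding that runs the Gaussian and jump parts (or two pieces of $\nu$ with disjoint supports) sequentially rather than simultaneously. The only difference is presentational: the paper builds the competing process pathwise from the L\'evy--It\^{o} decomposition, whereas you specify it directly by its system of generating triplets; your write-up is in fact somewhat more careful about monotonicity/continuity of $t\mapsto(\sigma_t,\nu_t)$ and about what ``time change'' should mean.
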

\begin{proof}${}$
 \begin{itemize}
 \item[(a)] Due to Theorem \ref{inf_div}, each $\mu\in \ID(\ast)$ can be embedded into a L\'evy process. 
 \item[(b)] Let $\mu$ be embedded into a normalized process $(X_t)$ with distributions 
 $\mu_t$. Let $L(\mu_t)=(0,\sigma_t,\nu_t)$. The L\'evy-It\^{o} decomposition yields two independent 
 additive processes 
 $(A_t)$ and $(B_t)$ with L\'evy triples $(0,\sigma_t,0)$ and $(0,0,\nu_t)$ respectively such that 
 $X_t=A_t+B_t$. We define the process $(Y_t)_{t\in[0,1]}$ by $Y_t=A_{2t}$ for $t\in [0,1/2]$ and 
 $Y_t=Y_{1/2}+B_{2(t-1/2)}$ for $t\in(1/2, 1]$. Then $Y_1=A_1+B_1$ has the distribution $\mu$.\\
 Let $\mu$ have a unique embedding. Then $(Y_t)$ is a time change of $(X_t)$ and this implies that 
 $\nu_t = 0$ for all $t\geq 0$, and thus $L(\mu)=(0,\sigma,0)$, or $\sigma_t=0$ for all $t\geq 0$ and 
 thus $L(\mu)=(0,0,\nu)$.\\
 Furthermore, suppose $L(\mu)=(0,0,\nu)$ with $\nu(\R)>0$ and $\nu$ is not of the form $\lambda\delta_{x_0}$. 
 Then the support of $\nu$ consists of at least two points and we can decompose $\nu$ into $\nu = \nu_1+\nu_2$ 
 for some positive measures $\nu_1, \nu_2$ having different supports. A similar construction shows that 
 the unique embedding of $\mu$ implies $\nu_1=0$ or $\nu_2=0$, a contradiction.\\ 
 
 Conversely, assume that $\mu$ with $L(\mu)=(0,\sigma,0)$ (or $L(\mu)=(0,0,\lambda\delta_{x_0})$) is embedded into 
 an additive process $(X_t)$ with distributions $\mu_t$ such that $L(\mu_s)=(0,\sigma_s,\nu_s)$ with 
 $\nu_s\not=0$ (or $\sigma_s\not=0$) for some $s<1$. Then $\mu = \mu_s \ast \mu_{s,1}$, where $\mu_{s,1}$ is the 
 distribution of $X_1-X_s$, 
 and this implies that $L(\mu)=(0,\sigma,\nu)$ for some $\nu\not=0$ (or $\sigma\not=0$), a contradiction. \\
Hence, $L(\mu_t)=(0,\sigma_t,0)$ (or $L(\mu_t)=(0,0,\lambda_t\delta_{x_0})$) for all $t\in[0,1]$ 
and $t\mapsto \sigma_t$ (or $t\mapsto \lambda_t$) is non-decreasing. Such processes are unique with respect to time changes. 
 

\end{itemize}
\end{proof}
\begin{remark}
 The cases from (b) correspond to the Dirac measure at $0$ ($\sigma=0, \nu=0$),
 the normal distribution with mean 0 and variance $\sigma^2$ ($\sigma>0, \nu=0$), and 
 the case $\sigma = 0, \nu=\lambda \delta_{x_0}\not=0$ corresponds to 
 certain Poisson distributions. Let $N$ be a Poisson random variable with parameter 
 $\lambda>0$ and let $X=x_0 \cdot N$ for some $x_0\not=0$. The distribution $\mu$ of $X$ satisfies 
 $\F_\mu(x) = \exp(\lambda(e^{ixx_0}-1))$. If $|x_0|\geq 1$, then $L(\mu)=(0,0,\lambda \delta_{x_0})$.\\
 If $|x_0|< 1$, then $L(\mu)=(\lambda x_0,0,\lambda \delta_{x_0})$. Hence, the distribution of 
 $X-\lambda x_0$ has the L\'evy triple $(0,0,\lambda\delta_{x_0})$.\\
 Our definition of a ``normalized additive process'' is somehow arbitrary, basically because the 
 cut--off function in representation \eqref{khin} can be replaced by others.\\
 One could also normalize 
 by subtracting the mean of $\mu$, provided it exists. 
 However, also there, we end up with the Dirac measure at $0$, the normal distribution with mean $0$ and 
 variance $\sigma^2$, and (scaled and shifted versions of) the Poisson distribution. 
 \end{remark}

\subsection{Problems}

\begin{problem}
 Let $\mu$ be a probability measure satisfying the conditions $(a)$-$(c)$ from 
Theorem \ref{slit_measures} or \ref{slit_measures2} respectively. Is $F_\mu$ necessarily univalent?
\end{problem}
\begin{problem}Which probability measures $\mu$ have a surjective $F$-transform, i.e. 
 $F_\mu(\Ha)=\Ha$.  An example of such $\mu$ is given in Remark \ref{surj}.
\end{problem}
\begin{problem}Motivated by Theorem  \ref{slit_measures}, 
 we can replace the Hilbert transform $\mathcal{H}_\mu$ by some other transform $T_\mu$ 
 and consider probability measures $\mu$ such that 
 there exists a decreasing homeomorphism $h:\R\to\R$ with
 \[\mu(A)=\mu(h(A)) \quad \text{and} \quad T_\mu\circ h=T_\mu\] 
for all Borel sets $A\subset\R$. \\
Is it true that in case $T_\mu=\F_\mu$ we necessarily have $h(x)=-x$?\\
An example is the normal distribution with mean $0$ and variance 
$\sigma^2$, where $\F_{\mu}(x)=e^{-\sigma^2x^2/2}$, and thus $h(x)=-x$.
\end{problem}

\subsection*{Acknowledgements}

The authors would like to thank Mihai Iancu for helpful discussions and the proof of Theorem 
\ref{S_d}.

%
%
%
%


\bibliographystyle{amsplain}

\newpage

\end{document}